\title{Extended Formulations for Packing and Partitioning Orbitopes}
\author[Faenza]{Yuri Faenza}
\address[Faenza]{Dipartimento di Ingegneria dell'Impresa, Università di Roma "Tor Vergata", Rome,
Italy}
\email[Faenza]{faenza@disp.uniroma2.it}
\thanks{This work has been supported by the European Union, FP6, MRTN-CT-2003-504438 (ADONET)}
\author[Kaibel]{Volker Kaibel}
\address[Kaibel]{Otto-von-Guericke Universit\"at Magdeburg, Fakult\"at f\"ur Mathematik, Universit\"atsplatz~2, 39106~Magdeburg, Germany}
\email[Kaibel]{kaibel@ovgu.de}
\date{\today}
\newtheorem{theorem}{Theorem}
\newtheorem{lemma}[theorem]{Lemma}
\newtheorem{rem}[theorem]{Remark}
\newtheorem{corollary}[theorem]{Corollary}
\newcommand{\R}{\mathbbm{R}}
\newcommand{\orbipartinds}[2]{\mathcal{I}_{{#1},{#2}}}
\DeclareMathOperator{\barop}{bar}
\DeclareMathOperator{\bigoop}{O}
\newcommand{\barr}[2]{\barop_{{#1},{#2}}}
\DeclareMathOperator{\orbiop}{O}
\newcommand{\orbipack}[2]{\orbiop^{\le}_{#1,#2}}
\newcommand{\orbipart}[2]{\orbiop^{=}_{#1,#2}}
\newcommand{\zerovec}{\mathbf{0}}
\newcommand{\onevec}{\mathbf{1}}
\newcommand{\row}[1]{\text{row}_{#1}}
\newcommand{\bigo}[1]{\bigoop({#1})}
\DeclareMathOperator{\digraphOp}{D}
\newcommand{\digraph}[2]{\digraphOp_{{#1},{#2}}}
\DeclareMathOperator{\nodesOp}{V}
\newcommand{\nodes}[2]{\nodesOp_{{#1},{#2}}}
\DeclareMathOperator{\arcsOp}{A}
\newcommand{\arcs}[2]{\arcsOp_{{#1},{#2}}}
\newcommand{\vertarcs}[2]{\arcsOp^{\shortdownarrow}_{{#1},{#2}}}
\newcommand{\varc}[2]{({#1},{#2})^{\shortdownarrow}}
\newcommand{\diagarcs}[2]{\arcsOp^{\begin{rotate}{45}\tiny{$\shortdownarrow$}\end{rotate}}_{{#1},{#2}}}
\newcommand{\darc}[2]{({#1},{#2})^{\,\begin{rotate}{45}\tiny{$\shortdownarrow$}\end{rotate}}}
\newcommand{\ints}[1]{[{#1}]}
\newcommand{\intszero}[1]{[{#1}]_0}
\newcommand{\setdef}[2]{\{{#1}\,:\,{#2}\}}
\newcommand{\nodeset}[1]{\nodesOp({#1})}
\DeclareMathOperator{\tnodesOp}{T}
\newcommand{\tnodeset}[1]{\tnodesOp({#1})}
\DeclareMathOperator{\snodesOp}{S}
\newcommand{\snodeset}[1]{\snodesOp({#1})}
\DeclareMathOperator{\flowsOp}{F}
\newcommand{\flows}[2]{\flowsOp_{{#1},{#2}}}
\DeclareMathOperator{\colop}{col}
\newcommand{\col}[1]{\colop_{#1}}
\newcommand{\vbar}[2]{\col{{#1},{#2}}}
\DeclareMathOperator{\outOp}{out}
\newcommand{\outarcs}[1]{\outOp({#1})}
\newcommand{\vertoutarcs}[1]{\outOp^{\shortdownarrow}({#1})}
\DeclareMathOperator{\inOp}{in}
\newcommand{\inarcs}[1]{\inOp({#1})}
\newcommand{\vertinarcs}[1]{\inOp^{\shortdownarrow}({#1})}
\newcommand{\diaginarcs}[1]{\inOp^{\begin{rotate}{45}\tiny{$\shortdownarrow$}\end{rotate}}\,({#1})}
\DeclareMathOperator{\extpolyOp}{P}
\newcommand{\extpoly}[2]{\extpolyOp_{{#1},{#2}}}
\newcommand{\extpolycompact}[2]{\extpolyOp^{\text{comp}}_{{#1},{#2}}}
\newcommand{\extpolypart}[2]{\extpolyOp^{=}_{{#1},{#2}}}
\DeclareMathOperator{\scipolyOp}{Q}
\newcommand{\scipoly}[2]{\scipolyOp_{{#1},{#2}}}
\newcommand{\scalprod}[2]{\langle{#1},{#2}\rangle}
\begin{document}

\maketitle

\begin{abstract}
    We give  compact extended formulations for the packing and partitioning orbitopes (with respect to the full symmetric group) described and analyzed in~\cite{KP08}. These polytopes are the convex hulls of all 0/1-matrices with lexicographically sorted columns and at most, resp. exactly, one $1$-entry per row. They are important objects for symmetry reduction in certain integer programs.
    Using the extended formulations, we also derive a rather simple proof of the fact~\cite{KP08} that basically shifted-column inequalities suffice in order to describe those orbitopes linearly.
\end{abstract}

\section{Introduction}\label{sec:intro}

Exploitation of symmetries is crucial for many very difficult integer programming models.
Over the last few years, significant progress has been achieved with respect to general techniques for dealing with symmetries within branch-and-cut algorithms.
Very nice and effective procedures have been devised, like isomorphism pruning \cite{Mar02,Mar03,Mar03b,Mar07} and orbital branching~\cite{LiOsRoSm06,LiOsRoSm08}. There has also been progress in understanding linear inequalities to be added to certain integer programs in order to remove symmetry. Towards this end, orbitopes have been introduced in~\cite{KP08}.

The \emph{packing orbitope} $\orbipack{p}{q}$ and the \emph{partitioning orbitope} $\orbipart{p}{q}$ are the convex hulls of all 0/1-matrices  of size $p\times q$ whose columns are in lexicographically decreasing order having
 at most or exactly, respectively,  one $1$-entry per row. In~\cite{KP08}, complete  descriptions with linear inequalities have been derived for these polytopes (see Thm.~16 and~17 in~\cite{KP08}).
Knowledge on  orbitopes turns out to be quite useful in practical symmetry reduction for certain integer programming models. For instance, in a well-known  formulation of the graph partitioning problem (for  graphs having~$p$ nodes to be partitioned into~$q$ parts) the symmetry on the 0/1-variables $x_{ij}$ indicating whether node~$i$ is put into part~$j$ of the partitioning the symmetry arising from permuting the parts can be removed by requiring~$x\in\orbipart{p}{q}$. We refer to~\cite{KaPePf07} and~\cite{KP08} for a more detailed discussion of the practical use of orbitopes.

The topic of this paper are extended formulations for these
orbitopes, i.e., (simple) linear descriptions of  higher
dimensional polytopes which can be projected to~$\orbipack{p}{q}$
and~$\orbipart{p}{q}$. In fact, such extended formulations play
important roles in polyhedral combinatorics and integer
programming in general, because rather than solving a linear
optimization problem over a polyhedron in the original space, one
may solve it over a (hopefully simpler described) polyhedron of
which the first one is a linear projection. For instance, the
lift-and-project approach~\cite{BCC93} and other general
reformulation schemes (e.g., the ones due to Lov\'{a}sz and
Schrijver~\cite{LS91} as well as Adams and Sherali~\cite{SA94})
are based on extended formulations. Recent examples include work
on mixed integer programming for duals of network
matrices~\cite{CdSW08,CGZ07,CdSEW06}. More classical is the
general theory on extended formulations obtained from (certain)
dynamic programming algorithms~\cite{MRC90}. The results of this
paper are much in the spirit of the latter work.

In order to give an overview on the contributions of this paper let us first
recall a few facts on orbitopes. As no 0/1-matrix with at most one
$1$-entry per row and lexicographically (decreasing) sorted
columns  has a one above its main diagonal, we may assume without loss of generality
that
$\orbipart{p}{q}\subseteq\orbipack{p}{q}\subseteq\R^{\orbipartinds{p}{q}}$
with
$\orbipartinds{p}{q}=\setdef{(i,j)\in\ints{p}\times\ints{q}}{i\ge
j}$ (where
$\ints{n}=\{1,2,\dots,n\}$). In fact, $\orbipart{p}{q}$ is the
face of~$\orbipack{p}{q}$ defined by requiring that all
\emph{row-sum inequalities} $x(\row{i})\le 1$ for  $i\in\ints{p}$
are satisfied with equality, where
$\row{i}=\setdef{(i,j)\in\orbipartinds{p}{q}}{j\in\ints{q}}$.

The main result of~\cite{KP08} is a complete  description
of~$\orbipack{p}{q}$ and~$\orbipart{p}{q}$ by means of linear
equations and inequalities. This system of constraints (the
\emph{SCI-system}) consists, next to  nonnegativity constraints
and  row-sum inequalities (or row-sum equations for
$\orbipart{p}{q}$), of the  exponentially large class  of
\emph{shifted column inequalities} (\emph{SCI}) that will be
defined  at the end of Sect.~\ref{sec:setup}. In~\cite{KP08} it is
also proved that, up to a few exceptions, these exponentially many
SCIs define facets of the orbitopes. The proof given
in~\cite{KP08} of the fact that the SCI-system completely
describes these orbitopes is rather lengthy and somewhat
technical. Extending over pages $18$ to $27$, it hardly leaves (not only) the
reader with a good idea of the reasons
for the SCI-system being sufficient to describe the orbitopes.

In contrast to this, the contributions of the present work are the
following: We provide a quite simple extended formulation
for~$\orbipack{p}{q}$ (along with a rather short proof
establishing this) and, moreover, we show by some simple and
natural (not technical) arguments that the SCI-system describes
the projection of the feasible region of that extended formulation
to the original space, thus providing a new proof showing that the
SCI-system describes~$\orbipack{p}{q}$.

This latter proof is much shorter than the original one, and it seems to provide much better insight into the  reasons for the SCI-system to describe the orbitopes. Clearly, as $\orbipart{p}{q}$ is a face of~$\orbipack{p}{q}$, the results for the latter polytope immediately yield corresponding results for the first one.
However, besides leading to that simpler proof, we believe that our extended formulation for $\orbipack{p}{q}$ is interesting itself. It provides a description of a quite natural polytope (the orbitope~$\orbipack{p}{q}$) by a system of constraints in a space whose dimension is roughly twice the original dimension~$|\orbipartinds{p}{q}|$ with only linearly (in~$|\orbipartinds{p}{q}|$) many nonzero coefficients, while every linear description of the orbitope in the original space requires exponentially many inequalities. This may also turn out to be  computationally attractive.

The basic idea of our extended formulation is to assign to each vertex of~$\orbipack{p}{q}$ a directed path in a certain acyclic digraph. The additional variables in our extended formulations are used to suitably express these paths. The
digraph we work with is set up in Sect.~\ref{sec:setup}, where we also fix some notations and define SCIs.
In Sect.~\ref{sec:ext} we then describe the extended formulations for $\orbipack{p}{q}$ and $\orbipart{p}{q}$ (Thm.~\ref{thm:extform} and Cor.~\ref{cor:extpartpath}).
 The
main work is done in Sect.~\ref{subsec:ext:pack}, where the
extended formulation for~$\orbipack{p}{q}$ with additional
variables encoding the paths mentioned above is introduced and
proved to define an integral polyhedron (Thm.~\ref{thm:integral}).
From this it is easy to conclude that the formulation indeed
defines a polytope that projects down to~$\orbipack{p}{q}$
(Thm.~\ref{thm:extform}). Both the extension of such results to
the partitioning case $\orbipart{p}{q}$
(Cor.~\ref{cor:extpartpath} in Sect.~\ref{subsec:ext:part}), and
the transformations of the systems in order to reduce the numbers
of variables and nonzero coefficients (Thm.~\ref{thm:verycompact}
in Sect.~\ref{subsec:ext:reduce}) are obtained without much work.
 On the way, we also derive linear (in~$|\orbipartinds{p}{q}|$) time algorithms for optimizing linear objective functions over~$\orbipack{p}{q}$ and~$\orbipart{p}{q}$ (Cor.~\ref{cor:alg} and~\ref{cor:alg:part}).
In Sect.~\ref{sec:project} we finally prove that  the projection
of the feasible region defined by the extended formulation contains the polytope defined by
the SCI-system
(Thm.~\ref{thm:liftscipoly}), thus providing the new  proof of the fact (Thm.~\ref{thm:orbipackSCI})
that the latter polytope equals~$\orbipack{p}{q}$. We conclude with a few remarks and
acknowledgements in Sect.~\ref{sec:remarks}.

\section{The Setup}
\label{sec:setup}

Let us assume $p\ge q\ge 1$ throughout the paper.
We define a directed acyclic graph $\digraph{p}{q}=(\nodes{p}{q},\arcs{p}{q})$ with node set
\[
    \nodes{p}{q}=\orbipartinds{p}{q}\uplus(\intszero{p}\times\{0\})\uplus\{s\}\uplus\{t\}
\]
(where $\intszero{n}=\ints{n}\cup\{0\}$ and $\uplus$ means disjoint union).
Using the notation $q(i)=\min\{i,q\}$,
the set of arcs of $\digraph{p}{q}$ is
\[
    \arcs{p}{q}=\vertarcs{p}{q} \cup\diagarcs{p}{q}\cup\{(s,(0,0))\}\cup\setdef{((p,j),t)}{j\in\intszero{q}}\,,
\]
where
\[
    \vertarcs{p}{q}=\setdef{((i,j),(i+1,j))}{i\in\intszero{p-1},j\in\intszero{q(i)}}
\]
is the set of \emph{vertical arcs} that are denoted by
$\varc{i}{j}=((i,j),(i+1,j))$, and
\[
    \diagarcs{p}{q}=\setdef{((i,j),(i+1,j+1))}{i\in\intszero{p-1},j\in\intszero{q(i+1)-1}}\,
\]
is the set of \emph{diagonal arcs} that are denoted by
$\darc{i}{j}=((i,j),(i+1,j+1))$.
The crucial property of~$\digraph{p}{q}$ is that every vertex of~$\orbipack{p}{q}$ induces an $s$-$t$-path in~$\digraph{p}{q}$ as indicated in Fig.~\ref{fig:digraph}. Note that different vertices may induce the same path.

\begin{figure}
    \centering
    \includegraphics[width=.35\textwidth]{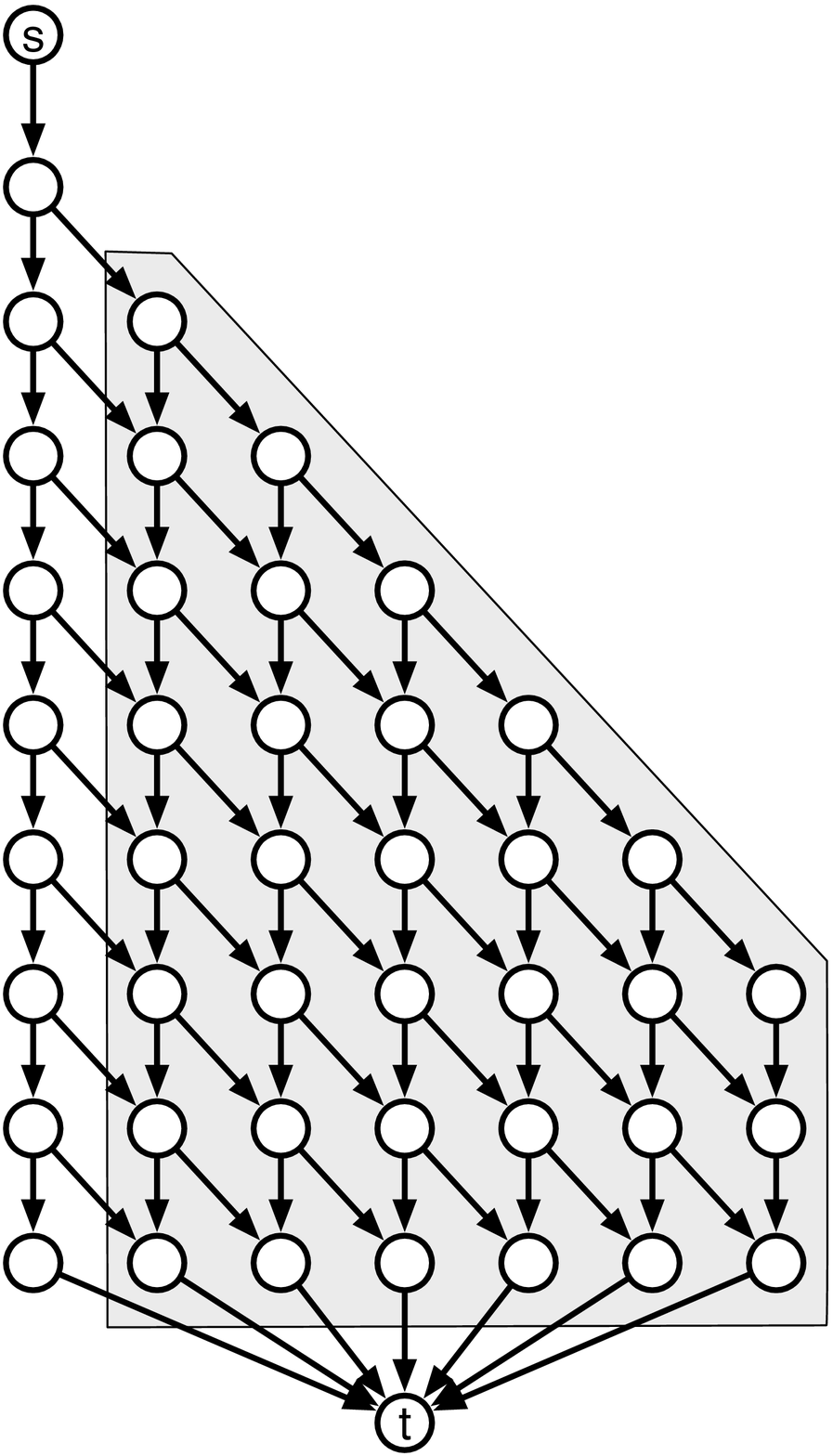}
    \hspace{.1\textwidth}
    \includegraphics[width=.35\textwidth]{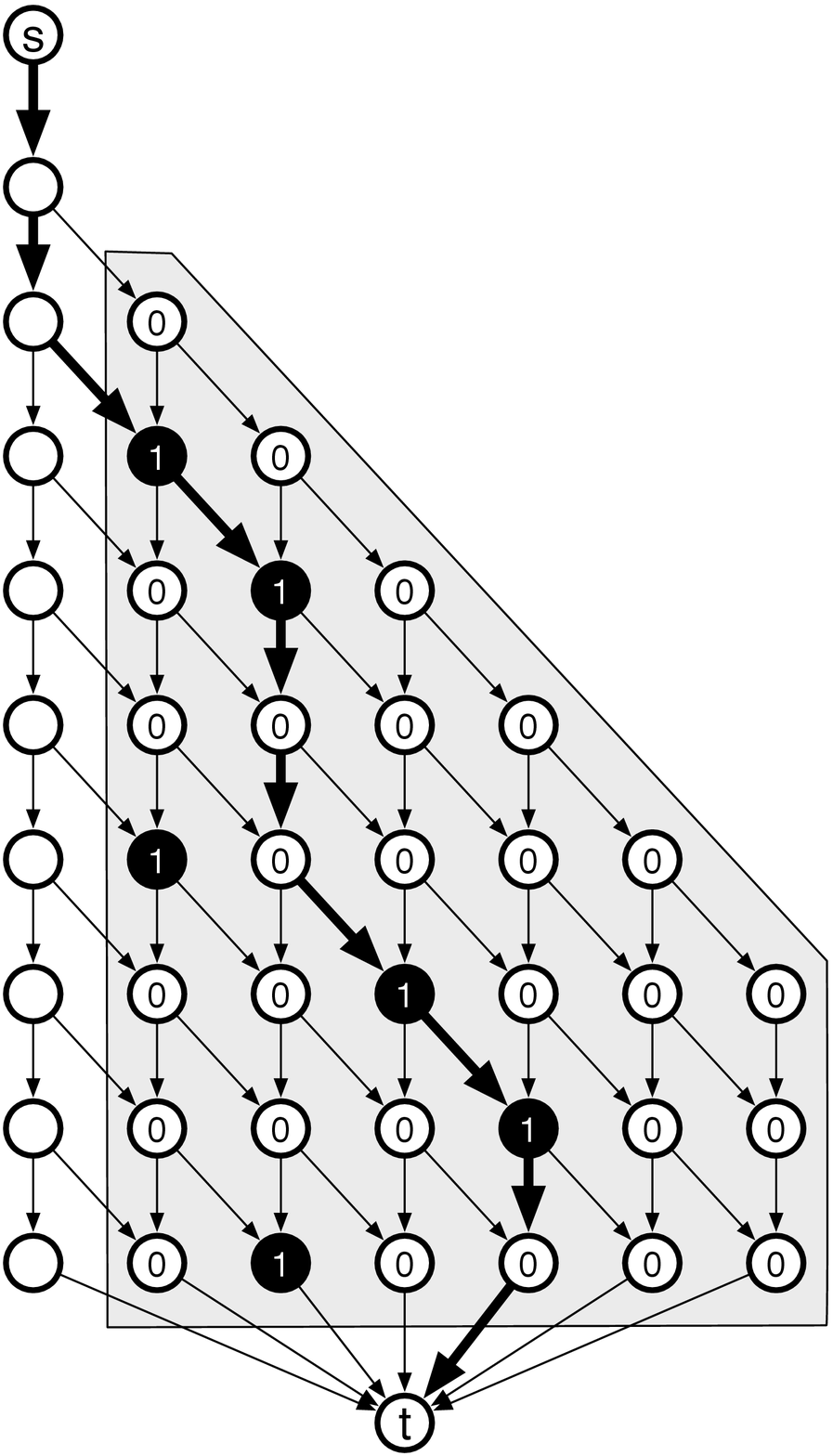}
    \caption{The digraph~$\digraph{8}{6}$ and a vertex of $\orbipack{8}{6}$ along with its $s$-$t$-path.}\label{fig:digraph}
\end{figure}

For a subset $W\subseteq\nodes{p}{q}$ we use the following notation:
\begin{eqnarray*}
    \outarcs{W} & = & \setdef{(w,u)\in\arcs{p}{q}}{w\in W,u\not\in W}\\
    \vertoutarcs{W} & = & \setdef{(w,u)\in\vertarcs{p}{q}}{w\in W,u\not\in W}\\
    \inarcs{W} & = & \setdef{(u,w)\in\arcs{p}{q}}{w\in W,u\not\in W}\\
    \vertinarcs{W} & = & \setdef{(u,w)\in\vertarcs{p}{q}}{w\in W,u\not\in W}\\
    \diaginarcs{W} & = & \setdef{(u,w)\in\diagarcs{p}{q}}{w\in W,u\not\in W}
\end{eqnarray*}
For a directed path~$\Gamma$ in $\digraph{p}{q}$, we denote by

\begin{tabular}{ll}
    $\nodeset{\Gamma}\subseteq\nodes{p}{q}$ & the set of nodes on the path~$\Gamma$,\\
    $\snodeset{\Gamma}\subseteq\nodeset{\Gamma}$ & the set of nodes on~$\Gamma$ not entered  by~$\Gamma$ via  diagonal arcs, and\\
    $\tnodeset{\Gamma}\subseteq\nodeset{\Gamma}$ & the set of nodes on~$\Gamma$  left by~$\Gamma$  via  diagonal arcs
\end{tabular}
(see Fig.~\ref{fig:stnodes}).
\begin{figure}
    \centering
    \includegraphics[width=.35\textwidth]{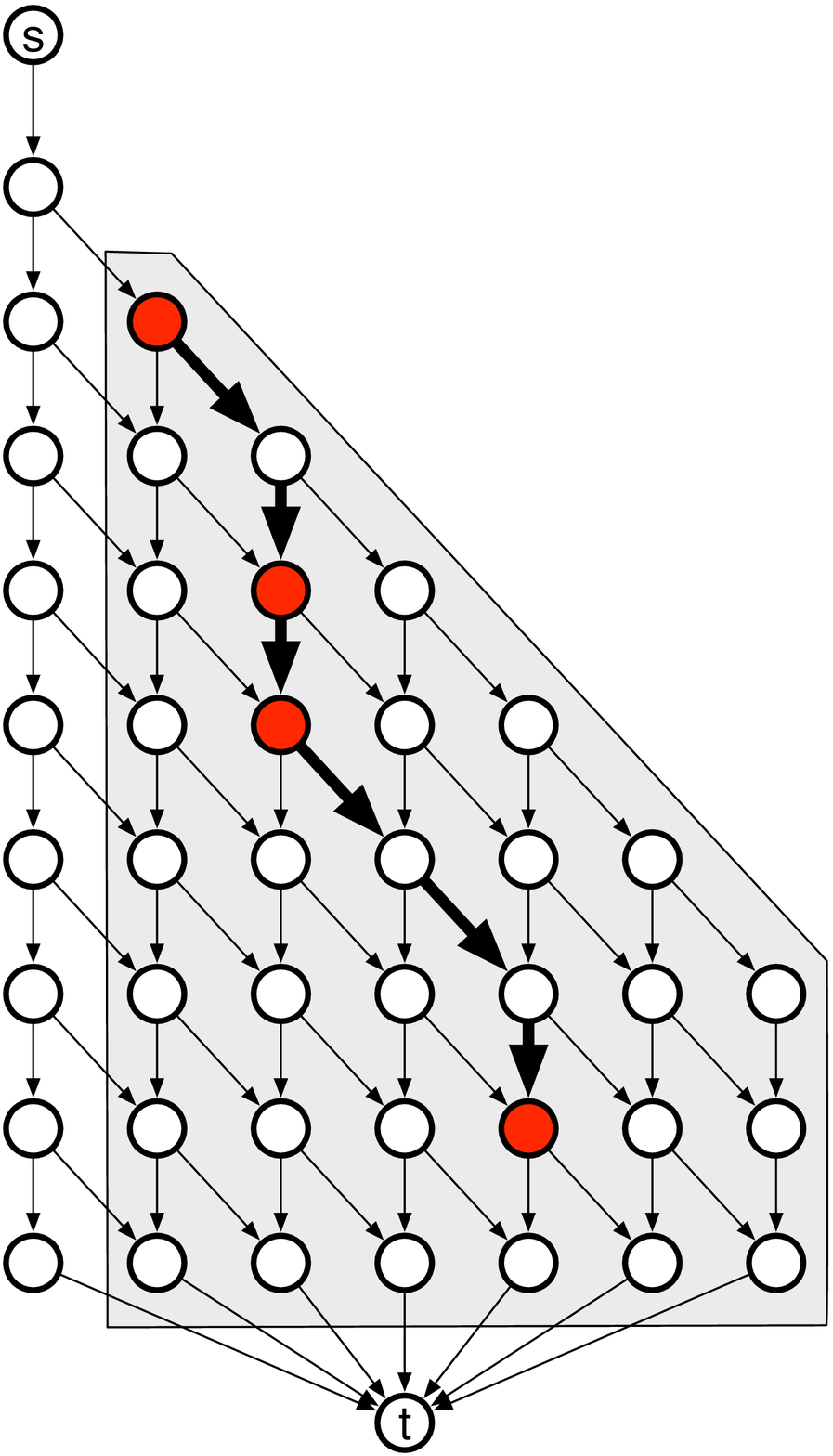}
    \hspace{.1\textwidth}
    \includegraphics[width=.35\textwidth]{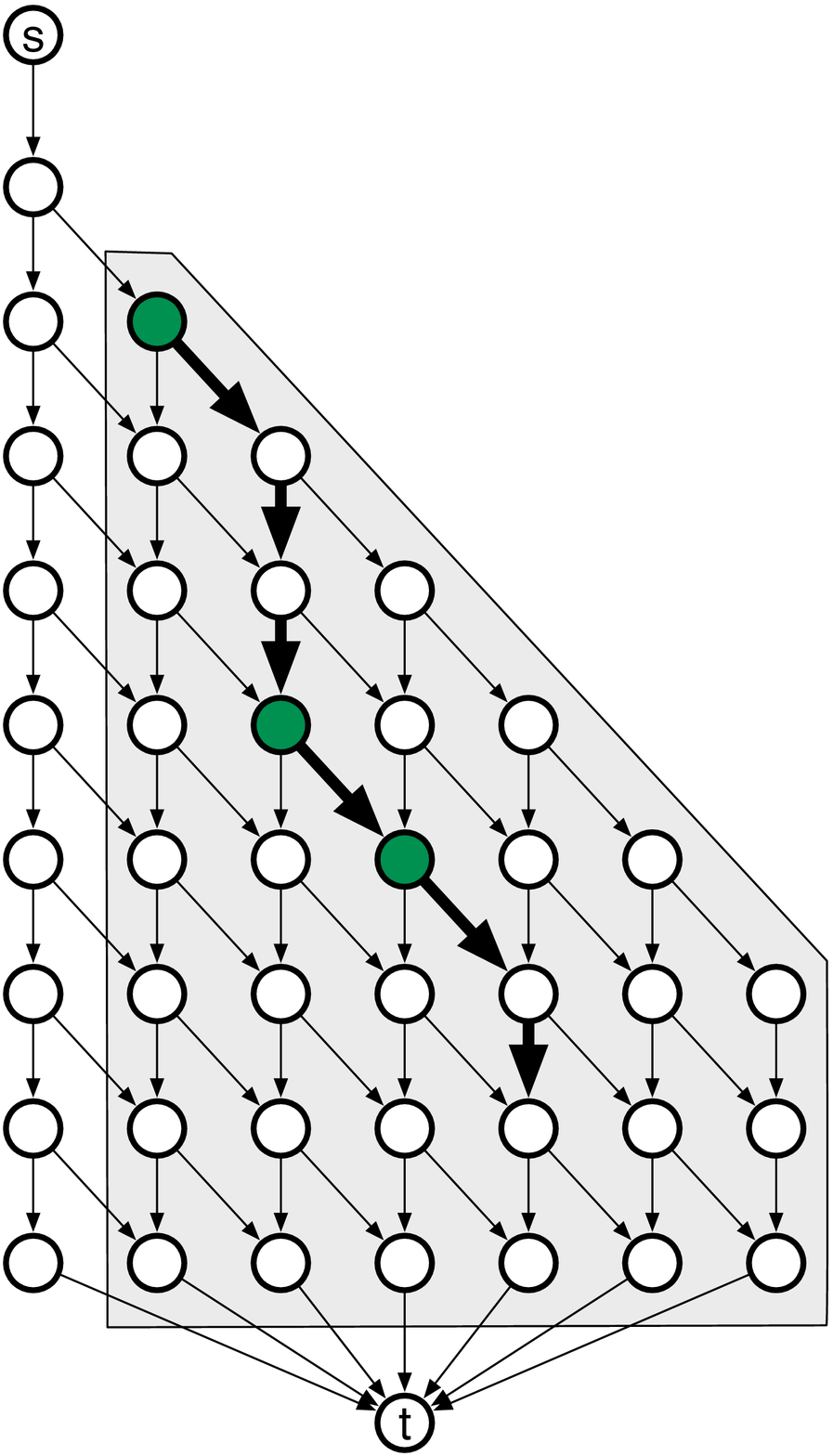}
    \caption{A path~$\Gamma$ along with the sets $\snodeset{\Gamma}$ (left) and $\tnodeset{\Gamma}$ (right).}\label{fig:stnodes}
\end{figure}
Note that $\snodeset{\Gamma}$ always contains the start node of~$\Gamma$, and $\tnodeset{\Gamma}$ always excludes the end node of~$\Gamma$.

\begin{rem}\label{rem:inoutgamma}
    For every directed path~$\Gamma$ in $\digraph{p}{q}$ with end node~$(i,j)\in\orbipartinds{i}{j}$, we have
    \[
        \diaginarcs{\nodeset{\Gamma}}=\diaginarcs{\snodeset{\Gamma}}
        \quad\text{and}\quad
        \vertoutarcs{\nodeset{\Gamma}\setminus\{(i,j)\}}=\vertoutarcs{\tnodeset{\Gamma}}\,.
    \]
\end{rem}

A subset $S\subseteq\orbipartinds{p}{q}$ is a \emph{shifted
column} if and only if $S=\snodeset{\Gamma}$ for some
$(\ell,\ell)$-$(i-1,j-1)$-path~$\Gamma$ in $\digraph{p}{q}$ with
$i\in\ints{p}\setminus\{1\}$, $j\in\ints{q}\setminus\{1\}$, and
$\ell\in\ints{q}$. The associated \emph{shifted-column
inequality} is $x(\barr{i}{j})\le x(S)$, where
$\barr{i}{j}=\setdef{(i,\ell)\in\orbipartinds{p}{q}}{\ell \ge j}$
and, as usual, we write $z(N)=\sum_{e\in N}z_e$ for some vector
$z\in \R^{M}$ and a subset $N\subseteq M$ (see
Fig.~\ref{fig:sci}).
\begin{figure}
    \centering
    \includegraphics[width=.35\textwidth]{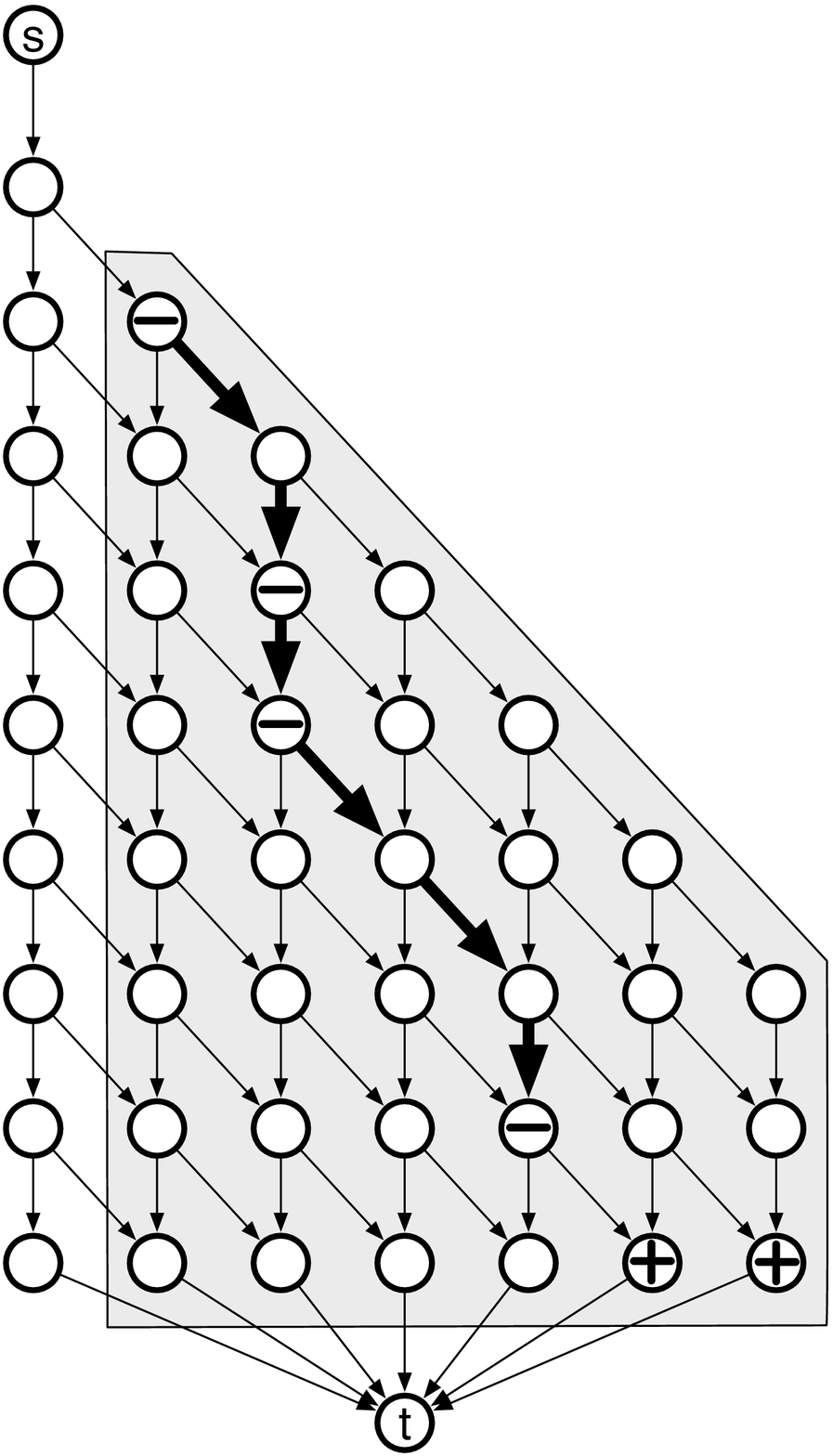}
    \hspace{.1\textwidth}
    \includegraphics[width=.35\textwidth]{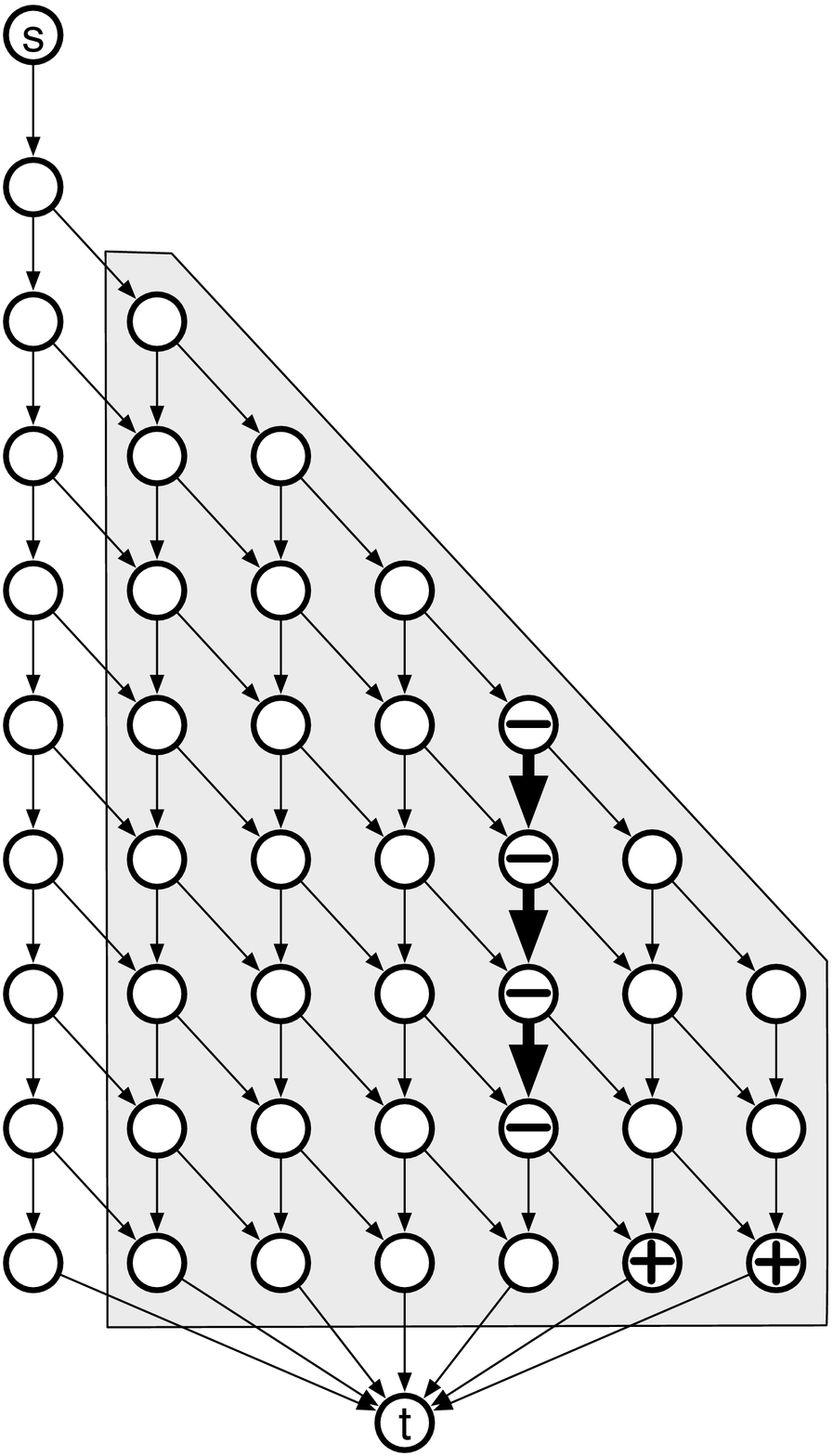}
    \caption{Coefficient vectors of two SCIs with the same bar $\barr{8}{5}$.}\label{fig:sci}
\end{figure}

\section{Extended formulations}
\label{sec:ext}

\subsection{The packing case}
\label{subsec:ext:pack}
Denote by $\flows{p}{q}\subseteq\R^{\arcs{p}{q}}$ the set of all $s$-$t$-flows (without any capacity restrictions)  in $\digraph{p}{q}$ with flow value one. Clearly, $\flows{p}{q}$ is an integral polytope. Since $\digraph{p}{q}$ is acyclic, the vertices of $\flows{p}{q}$ are the incidence vectors of the directed $s$-$t$-paths (viewed as subsets of arcs) in $\digraph{p}{q}$.

For a flow~$y\in\flows{p}{q}$ and a node $(i,j)\in\orbipartinds{p}{q}$, we denote by
\[
    y(i,j)=y(\inarcs{i,j})=y(\outarcs{i,j})
\]
the amount of flow passing node $(i,j)$. For a subset $W\subseteq\row{i}$ of nodes in the same row, $y(W)=\sum_{w\in W}y(w)$ is the total amount of flow entering~$W$ (or, equivalently, leaving~$W$).

\begin{lemma}\label{lem:cuts}
    For a directed $(k,\ell)$-$(i,j)$-path~$\Gamma$ in $\digraph{p}{q}$ with $(i,j)\in\orbipartinds{p}{q}$ the following statements hold for all $y\in\flows{p}{q}$ (see Fig.~\ref{fig:lemma2}):
    \begin{enumerate}
        \item If $k=\ell\ge 1$ then
        \[
        y(\diaginarcs{\snodeset{\Gamma}})-y(\vertoutarcs{\tnodeset{\Gamma}})=y(\barr{i}{j})\,.
        \]
        \item If $\ell=0$ then
        \[
        1-y(\vertoutarcs{\tnodeset{\Gamma}})=y(\barr{i}{j})\,.
        \]
    \end{enumerate}
\end{lemma}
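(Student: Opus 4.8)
The plan is to obtain both identities from flow conservation for the unit $s$-$t$-flow $y$ across a single cut that is read off from the path $\Gamma$. The only tool is the elementary fact that for any node set $W$ with $s,t\notin W$ one has
\[
y(\inarcs{W})=y(\outarcs{W}),
\]
together with the observation that if $(0,0)\in W$ then the unique arc $(s,(0,0))$ leaving the source lies in $\inarcs{W}$ and carries the entire flow value, hence contributes exactly $1$. Everything then comes down to choosing $W$ so that $\inarcs{W}$ and $\outarcs{W}$ are precisely the arc sets occurring in the two formulas.

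Concretely, I would let $W_\Gamma$ be the staircase region of all nodes lying on or to the right of $\Gamma$ in the rows from the start of $\Gamma$ down to row $i$, so that $\Gamma$ is the left boundary of $W_\Gamma$ and the bar $\barr{i}{j}$ is its bottom boundary. The key step is to identify the boundary arcs, and this is exactly what Rem.~\ref{rem:inoutgamma} is for. A diagonal arc can enter $W_\Gamma$ only at a node of $\Gamma$ that $\Gamma$ does not itself reach along a diagonal, so the diagonal arcs in $\inarcs{W_\Gamma}$ are precisely $\diaginarcs{\snodeset{\Gamma}}$; a vertical arc can leave $W_\Gamma$ above its bottom row only at a node that $\Gamma$ leaves along a diagonal, so these vertical arcs are precisely $\vertoutarcs{\tnodeset{\Gamma}}$; and every arc leaving $W_\Gamma$ through its bottom row emanates from a node of $\barr{i}{j}$, so the flow across the bottom equals $y(\barr{i}{j})$. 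One still has to check that there are no further crossing arcs at the ``corners'' of the staircase, which is the routine but careful part of the argument.

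The two statements then differ only in the top boundary of $W_\Gamma$. If $k=\ell\ge 1$, the path starts on the main diagonal, and in row $\ell$ the region contains only the node $(\ell,\ell)$; hence the sole arc crossing the top is the diagonal arc into the start node, which already belongs to $\diaginarcs{\snodeset{\Gamma}}$. Thus $\inarcs{W_\Gamma}=\diaginarcs{\snodeset{\Gamma}}$, and conservation gives statement~(1) directly. If $\ell=0$, the region instead reaches column $0$, and I would close it off upward along column $0$ up to $(0,0)$ so that the arc $(s,(0,0))$ becomes the incoming arc accounting for the flow crossing the top; its contribution is the full value $1$, which takes over the role of the diagonal term and yields statement~(2). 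I expect the genuine obstacle to be precisely this top-boundary analysis in the case $\ell=0$: one must verify that, after the upward extension to the source, the incoming flow across the cut collapses to exactly $1$, so that no residual diagonal term survives. Controlling these corner and column-$0$ incidences, for which Rem.~\ref{rem:inoutgamma} is the essential bookkeeping device, is where the real care is needed.
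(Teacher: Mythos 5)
Your strategy coincides with the paper's: read a node set $W$ off $\Gamma$ and $\barr{i}{j}$, apply flow conservation across the cut it induces, and use Remark~\ref{rem:inoutgamma} to identify the crossing arcs with $\diaginarcs{\snodeset{\Gamma}}$, $\vertoutarcs{\tnodeset{\Gamma}}$ and the arcs leaving $\barr{i}{j}$. In case~(1) your staircase region agrees with the paper's choice of $W$ (in each column met by $\nodeset{\Gamma}\cup\barr{i}{j}$, the segment from the top of that column down to the lowest such node), and the argument works essentially as you sketch it.

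The gap sits exactly at the step you defer in case~(2), and the deferred verification in fact fails. Whenever $\Gamma$ uses a vertical arc after having left column~$0$, the node of $\snodeset{\Gamma}$ that this arc enters, say $(r,b)$ with $b\ge 1$, is also the head of the diagonal arc $((r-1,b-1),(r,b))$, whose tail lies strictly below the region in column $b-1$; this arc therefore crosses the cut \emph{into} the region, and its flow does not ``collapse'' into the source contribution. Concretely, in $\digraph{3}{2}$ let $\Gamma$ be the path $(1,0)\to(2,1)\to(3,1)$, so $(i,j)=(3,1)$, $\tnodeset{\Gamma}=\{(1,0)\}$, $\snodeset{\Gamma}=\{(1,0),(3,1)\}$, and let $y$ be the unit flow along $s,(0,0),(1,0),(2,0),(3,1),t$. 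Then $1-y(\vertoutarcs{\tnodeset{\Gamma}})=0$ while $y(\barr{3}{1})=1$, the missing unit being the flow on $((2,0),(3,1))\in\diaginarcs{\snodeset{\Gamma}}$. So statement~(2) is false as written; the cut computation (carried out for the paper's upward-closed $W$, augmented by $s$) actually yields $1+y(\diaginarcs{\snodeset{\Gamma}})-y(\vertoutarcs{\tnodeset{\Gamma}})=y(\barr{i}{j})$, i.e., the diagonal term of case~(1) survives and the source merely contributes an additional~$1$. Be aware that the paper's own proof is open to the same objection: it asserts $\inarcs{W\cup\{s\}}=\varnothing$ without argument, and $((2,0),(3,1))$ belongs to that set in the example above. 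The corrected identity is still sufficient for the only use of case~(2), in the proof of Theorem~\ref{thm:liftscipoly}, where together with $y(\barr{i}{j})\le 1$ it again forces $y(\barr{i}{j})=1$.
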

\begin{figure}
    \centering
    \includegraphics[width=.35\textwidth]{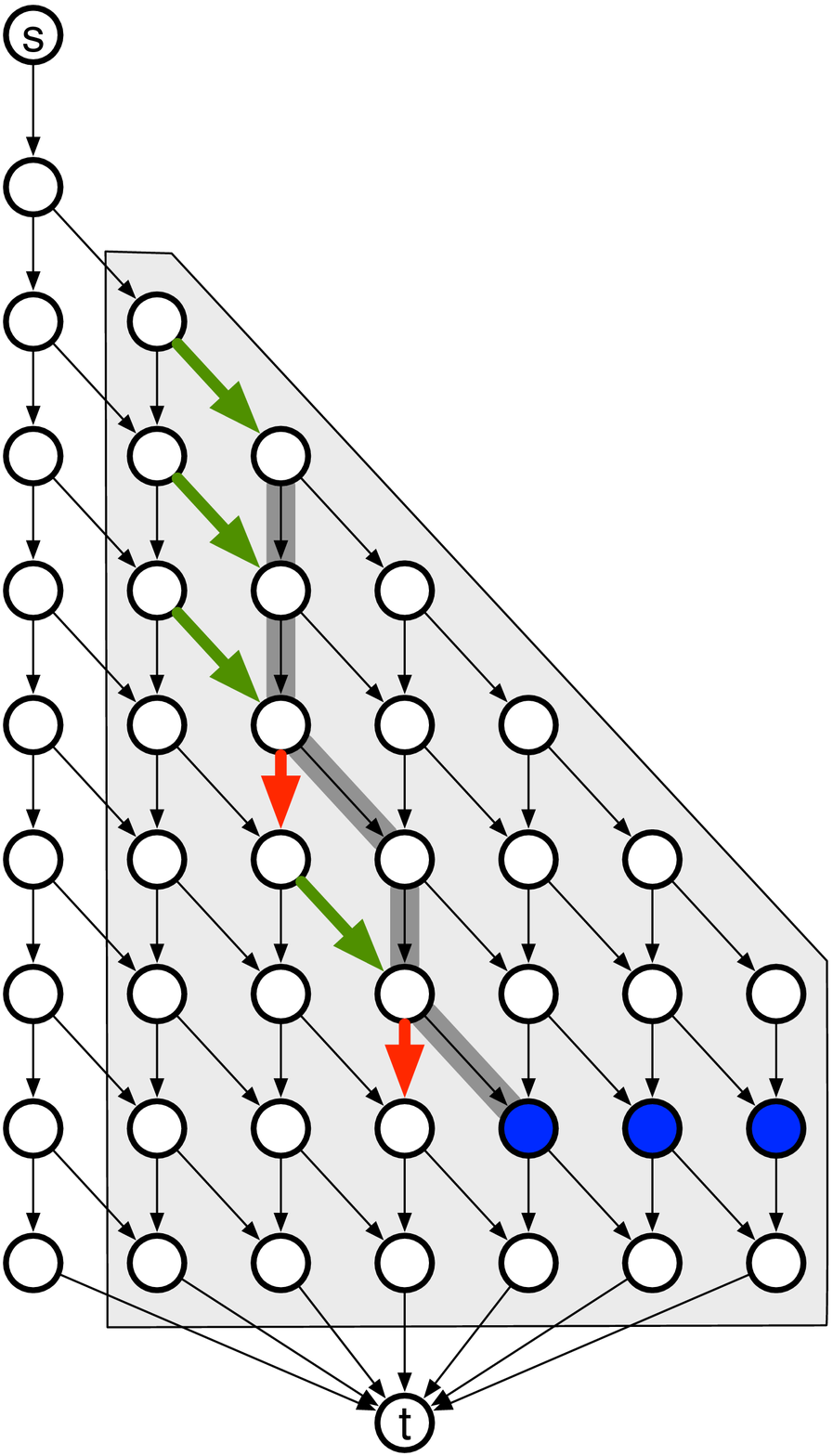}
    \hspace{.1\textwidth}
    \includegraphics[width=.35\textwidth]{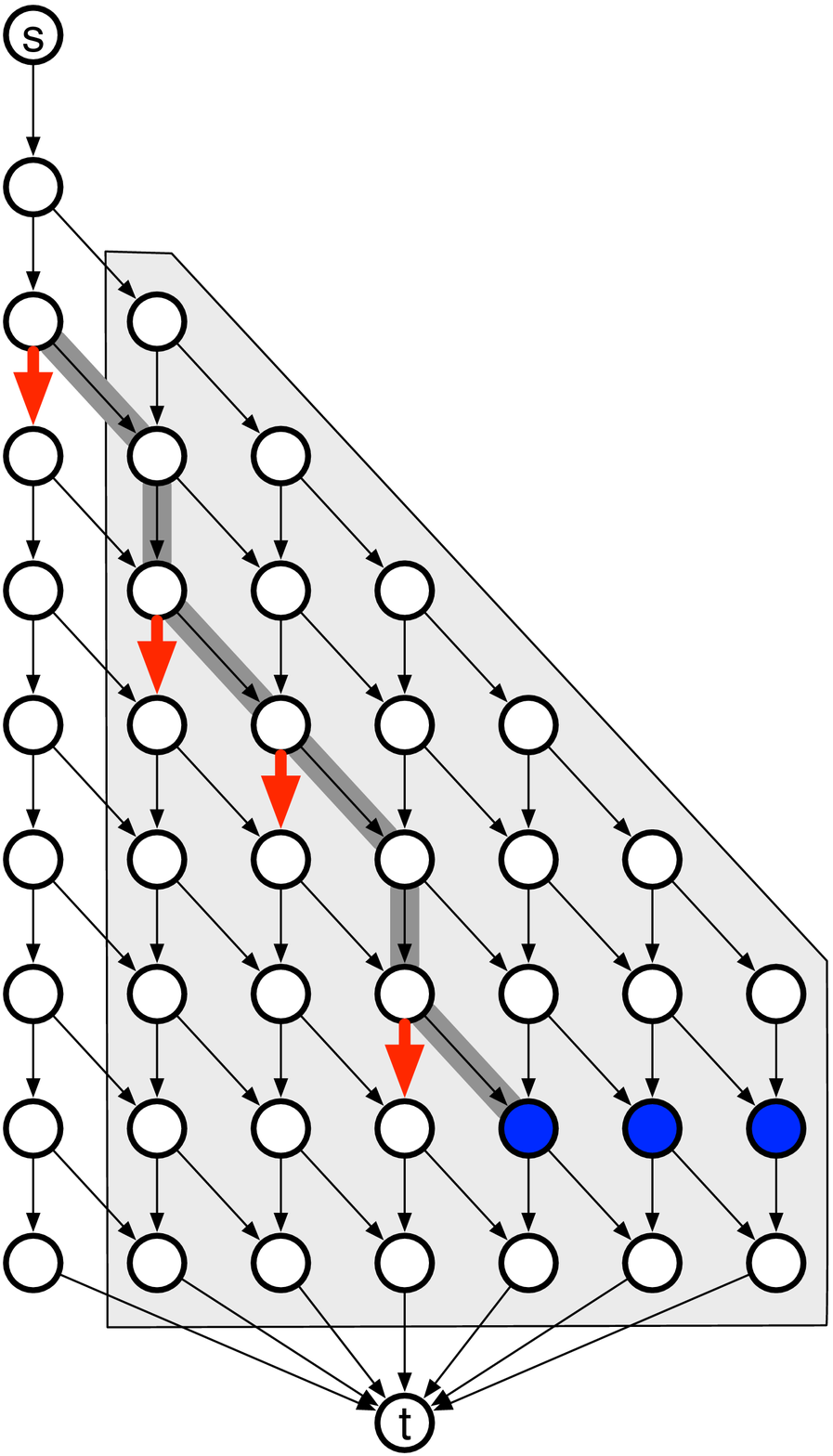}
    \caption{Illustration of part~(1) (left) and part~(2) (right) of Lemma~\ref{lem:cuts} with $(i,j)=(7,4)$.}\label{fig:lemma2}
\end{figure}

\begin{proof}
    For both cases, let
    \[
        W=\setdef{(a',b)\in\nodes{p}{q}\setminus\{s,t\}}{a'\le a \text{ for some }(a,b)\in\nodeset{\Gamma}\cup\barr{i}{j}}
    \]
    be the set of all nodes (different from~$s$ and~$t$) in or above $\nodeset{\Gamma}\cup\barr{i}{j}$.

    For case~(1), we start by observing
    \begin{equation}\label{eq:lem:cuts:1}
        y(\inarcs{W})=y(\outarcs{W})
    \end{equation}
    (as~$y\in\flows{p}{q}$ is an $s$-$t$-flow).
    Because of $(0,0)\not\in W$ (thus~$s$ being not adjacent to~$W$) we have $\inarcs{W}=\diaginarcs{\nodeset{\Gamma}}$, which according to Remark~\ref{rem:inoutgamma} equals $\diaginarcs{\snodeset{\Gamma}}$, yielding
    \begin{equation}\label{eq:lem:cuts:2}
        y(\inarcs{W})=y(\diaginarcs{\snodeset{\Gamma}})\,.
    \end{equation}
    Similarly, we have $\outarcs{W}=\vertoutarcs{\nodeset{\Gamma}\setminus\{(i,j)\}}\uplus\outarcs{\barr{i}{j}}$, where Remark~\ref{rem:inoutgamma} gives  $\vertoutarcs{\nodeset{\Gamma}\setminus\{(i,j)\}}=\vertoutarcs{\tnodeset{\Gamma}}$. Thus, we obtain
    \begin{equation}\label{eq:lem:cuts:3}
    y(\outarcs{W})  =y(\vertoutarcs{\tnodeset{\Gamma}})+y(\outarcs{\barr{i}{j}})\,.
    \end{equation}
    Equations~\eqref{eq:lem:cuts:1}, \eqref{eq:lem:cuts:2}, and~\eqref{eq:lem:cuts:3} imply the statement on case~(1).

    For case~(2), we exploit the fact that the $s$-$t$-flow $y\in\flows{p}{q}$ of value one satisfies
    \begin{equation}\label{eq:cuts:case2}
        1 = y(\outarcs{W\cup\{s\}})-y(\inarcs{W\cup\{s\}})\,.
    \end{equation}
    We have $\inarcs{W\cup\{s\}}=\varnothing$ and  $\outarcs{W\cup\{s\}}=\outarcs{W}$ (due to $(0,0)\in W$ in this case). From
    \[
        \outarcs{W}=\vertoutarcs{\nodeset{\Gamma}\setminus\{(i,j)\}}
        \uplus
        \outarcs{\barr{i}{j}}
    \]
    and $\vertoutarcs{\nodeset{\Gamma}\setminus\{(i,j)\}}=\vertoutarcs{\tnodeset{\Gamma}}$ (see Remark~\ref{rem:inoutgamma}), we thus derive the statement on case~(2) from~\eqref{eq:cuts:case2}.
\end{proof}

For $(i,j)\in\orbipartinds{p}{q}$ we denote by
\[
    \vbar{i}{j}=\setdef{(k,j)}{j\le k\le i}
\]
the upper part of the $j$-th column from $(j,j)$ down to $(i,j)$, including both nodes. For the directed path~$\Gamma$ with $\nodeset{\Gamma}=\vbar{i}{j}$, we have
$\snodeset{\Gamma}=\vbar{i}{j}$ and
$\tnodeset{\Gamma}=\varnothing$. Thus, part~(1) of Lemma~\ref{lem:cuts} implies the following.
\begin{rem}\label{rem:cuts}
    For all $y\in\flows{p}{q}$ and $(i,j)\in\orbipartinds{p}{q}$, we have
    \[
        y(\diaginarcs{\vbar{i}{j}})=y(\barr{i}{j})\,.
    \]
\end{rem}

The central object of study of this paper is the polytope
\[
    \extpoly{p}{q}=
    \setdef{(x,y)\in\R^{\orbipartinds{p}{q}}\times\R^{\arcs{p}{q}}}%
           {y\in\flows{p}{q} \text{ and }(x,y)%
            \text{ satisfies~\eqref{eq:bindxy} and~\eqref{eq:bindxybar} below}}
\]
with
\begin{equation}\label{eq:bindxy}
    y_{\darc{i-1}{j-1}}\le x_{ij}\quad\text{ for all }(i,j)\in\orbipartinds{p}{q}
\end{equation}
and
\begin{equation}\label{eq:bindxybar}
    x(\barr{i}{j})\le y(\diaginarcs{\vbar{i}{j}})\quad\text{ for all }(i,j)\in\orbipartinds{p}{q}
\end{equation}
(see Fig.~\ref{fig:eq56}).
\begin{figure}
    \centering
    \includegraphics[width=.35\textwidth]{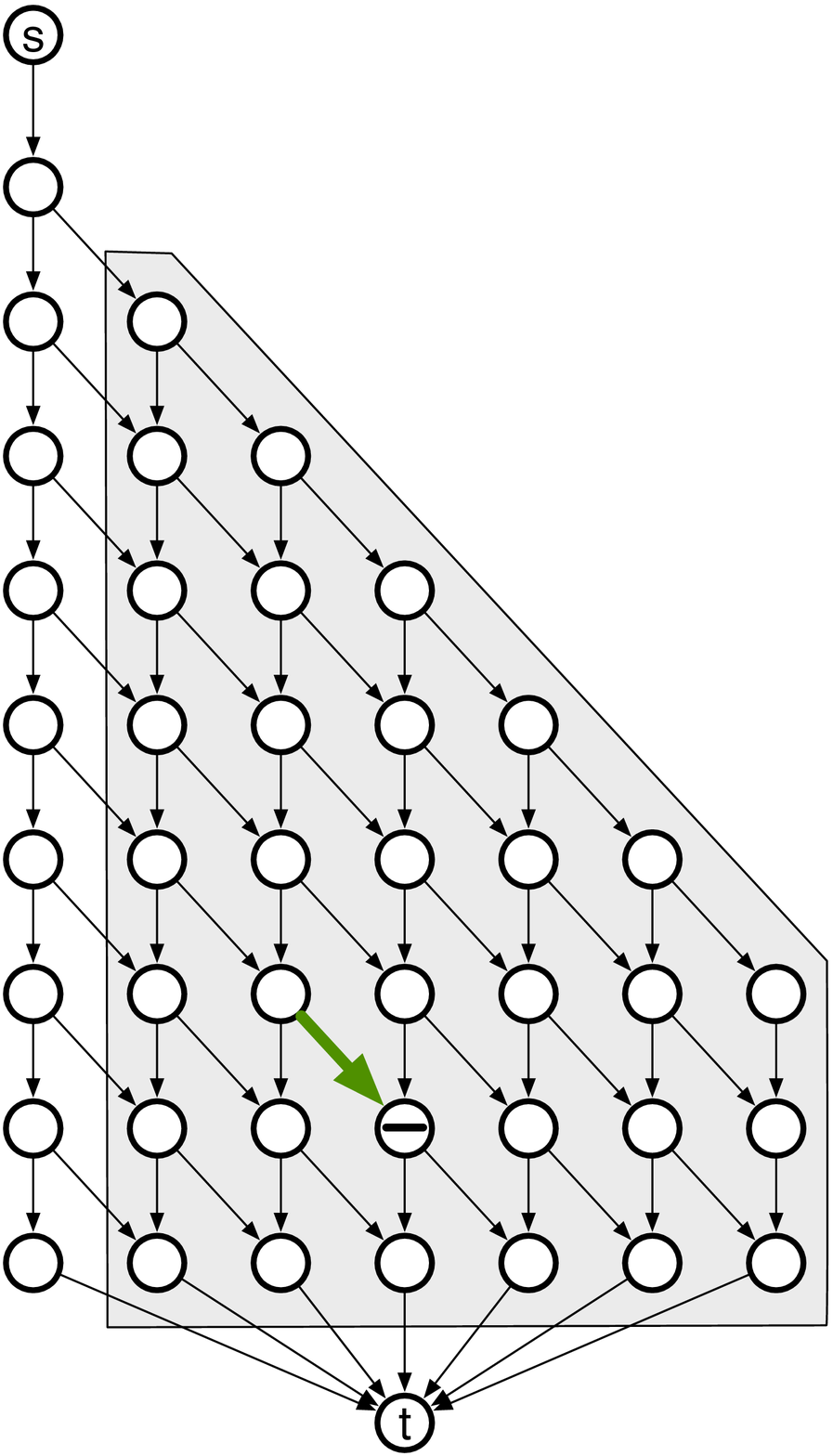}
    \hspace{.1\textwidth}
    \includegraphics[width=.35\textwidth]{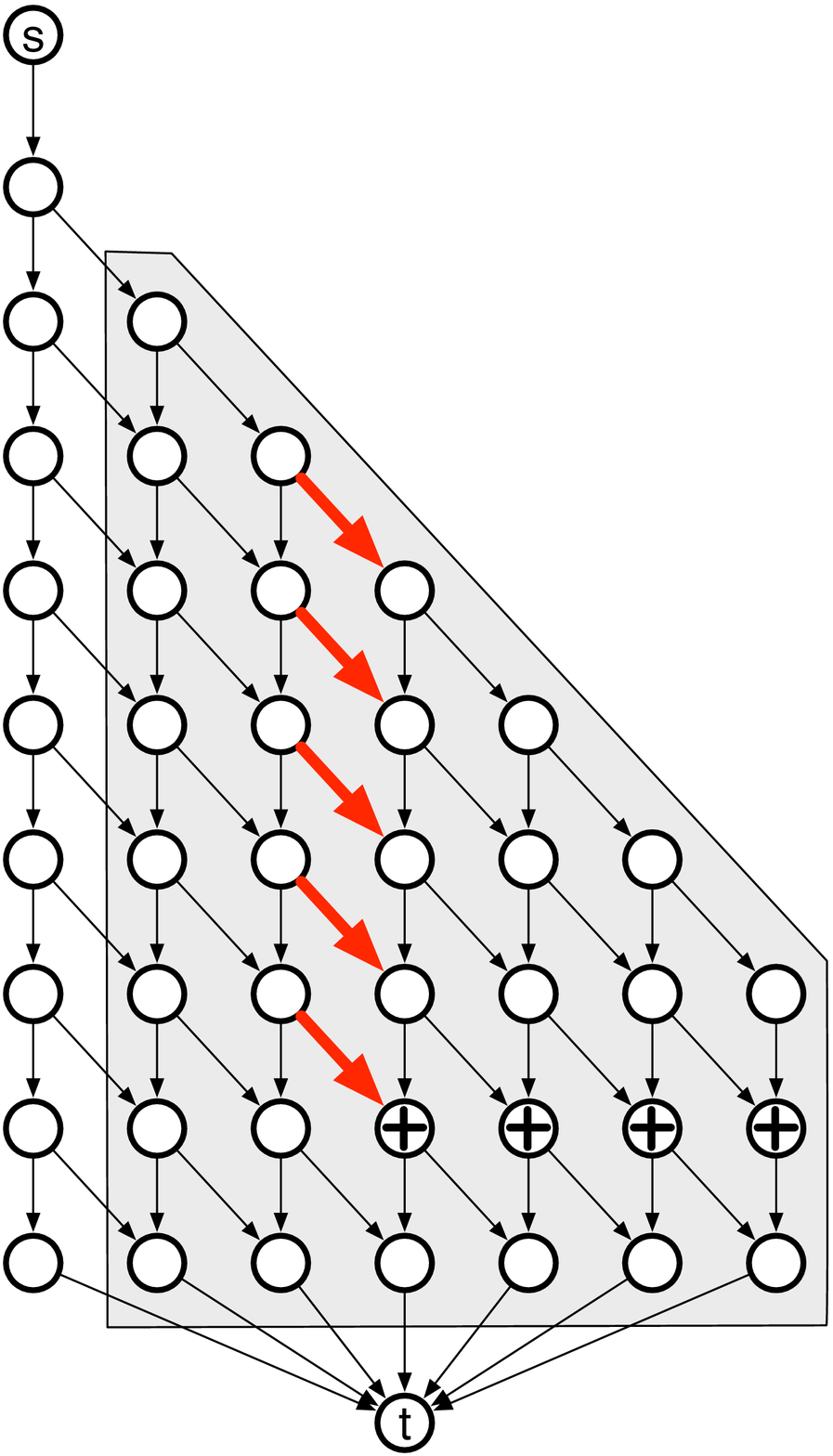}
    \caption{Coefficient vectors of inequalities~\eqref{eq:bindxy} (left) and~\eqref{eq:bindxybar} (right).}\label{fig:eq56}
\end{figure}
Since $\flows{p}{q}$ is the set of all $s$-$t$-flows of value one, all $(x,y)\in\extpoly{p}{q}$ satisfy $\zerovec\le y\le\onevec$, and thus $\zerovec\le x\le\onevec$ due to~\eqref{eq:bindxy} and~\eqref{eq:bindxybar}.  Furthermore, inequalities~\eqref{eq:bindxybar} imply the row-sum inequalities $x(\row{i})\le 1$ for all $(x,y)\in\extpoly{p}{q}$.

\begin{theorem}\label{thm:integral}
    The polytope $\extpoly{p}{q}$ is integral.
\end{theorem}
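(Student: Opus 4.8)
The plan is to show that an arbitrary vertex $(\bar x,\bar y)$ of $\extpoly{p}{q}$ is integral, in two stages: first that $\bar y$ is integral, and then, with $\bar y$ held fixed, that $\bar x$ is integral as well. The second stage will be routine, so the real work lies in the first. Throughout I will use that $\flows{p}{q}$ is an integral polytope whose vertices are the $s$-$t$-paths, and that Remark~\ref{rem:cuts} lets me rewrite the right-hand side of~\eqref{eq:bindxybar} as the genuine flow quantity $\bar y(\barr{i}{j})$.

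For the second stage, suppose $\bar y$ is already integral, i.e.\ the incidence vector of a path. Since each inequality~\eqref{eq:bindxy} and~\eqref{eq:bindxybar} involves only the $x$-variables of a single row, the set of feasible $x$ for this fixed $\bar y$ is a direct product over the rows $\row{i}$. Within a row the constraints are lower bounds $x_{ij}\ge \bar y_{\darc{i-1}{j-1}}$ on single entries together with upper bounds $x(\barr{i}{j})\le \bar y(\diaginarcs{\vbar{i}{j}})$ on the suffix sums $x(\barr{i}{j})$. The defining matrix thus has the consecutive-ones property in each row and is totally unimodular, while all right-hand sides are integral (a path enters any column through a diagonal arc at most once, so the bounds lie in $\{0,1\}$). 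Hence $\bar x$, being a vertex of this fibre, is integral.

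The first stage is the main obstacle. Assuming $\bar y$ is not integral, it is not a vertex of $\flows{p}{q}$, so there is a nonzero flow-conserving direction $g$ with $g_e=0$ whenever $\bar y_e=0$ and $\bar y\pm\varepsilon g\in\flows{p}{q}$ for small $\varepsilon$. I want to lift $g$ to a direction $(f,g)$ along which $(\bar x,\bar y)$ can be moved both ways inside $\extpoly{p}{q}$, contradicting vertexhood. For small $\varepsilon$ only the tight inequalities matter, so $f$ must satisfy $f_{ij}=g_{\darc{i-1}{j-1}}$ for every tight~\eqref{eq:bindxy} and $f(\barr{i}{j})=g(\diaginarcs{\vbar{i}{j}})$ for every tight~\eqref{eq:bindxybar}. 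This system again decouples over rows, so the only way it can fail is that some maximal block of columns delimited by consecutive tight instances of~\eqref{eq:bindxybar} (or lying below the last such instance) has all of its entries already pinned down by tight instances of~\eqref{eq:bindxy}, with the pinned values summing to the wrong total.

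The key point that removes this obstacle is that such an over-determined block cannot carry any vertical flow. On such a block, the tight instances of~\eqref{eq:bindxy} and the bounding tight instances of~\eqref{eq:bindxybar}, read off at $(\bar x,\bar y)$ and combined through Remark~\ref{rem:cuts} (which turns $\bar y(\diaginarcs{\vbar{i}{j}})$ into the node-flow sum $\bar y(\barr{i}{j})$, equal to the diagonal-in plus vertical-in flow), force the vertical in-arcs over the block to have total $\bar y$-flow zero; since $\bar y\ge\zerovec$, each of these arcs is flow-empty. As $g$ vanishes on every arc with $\bar y_e=0$, the corresponding block condition on $g$ holds automatically, so the per-row system for $f$ is feasible. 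Choosing any such $f$ yields a nonzero admissible direction $(f,g)$, contradicting that $(\bar x,\bar y)$ is a vertex. Therefore $\bar y$ is integral, and by the second stage so is $\bar x$.
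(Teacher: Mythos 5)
Your proof is correct, but it takes a genuinely different route from the paper's. The paper argues via optimization: for an arbitrary objective $c$ it constructs arc weights $c^{(1)}+c^{(2)}$ so that the LP over $\extpoly{p}{q}$ is dominated by a longest-path computation over $\flows{p}{q}$ (this is where Lemma~\ref{lem:integral}, an Abel-summation type inequality applied row by row to $x'_{ij}=x_{ij}-y_{\darc{i-1}{j-1}}$, does the work), and then shows the resulting bound is attained by a $0/1$-point built greedily from an optimal path. You instead argue directly about vertices: first that the $y$-part must be integral, by lifting any flow-perturbation direction $g$ to a direction $(f,g)$ feasible for the tight constraints, and second that the fibre over an integral $\bar y$ is an interval-matrix (hence totally unimodular) system with $0/1$ right-hand sides, so the $x$-part is integral too. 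The crux of your first stage --- that a row-block bracketed by tight instances of~\eqref{eq:bindxybar} and fully pinned by tight instances of~\eqref{eq:bindxy} satisfies $\bar y(\vertinarcs{\barr{i}{j_r}})-\bar y(\vertinarcs{\barr{i}{j_{r+1}}})=0$, hence carries no vertical flow, hence imposes no inconsistency on $g$ --- is sound; it is exactly the combination of Remark~\ref{rem:cuts} with the decomposition of node in-flow into its diagonal and vertical parts (with the convention that the missing vertical arcs into diagonal nodes $(i,i)$ and the unbounded last block $\{j_m,\dots,q(i)\}$ are treated as you implicitly do). What each approach buys: the paper's proof is constructive in the algorithmic sense and immediately yields the $\bigo{pq}$ optimization algorithm of Corollary~\ref{cor:alg}, whereas yours is more geometric, avoids Lemma~\ref{lem:integral} entirely, and isolates cleanly which constraint interactions could obstruct integrality (namely none, because over-determined blocks are flow-free).
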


For the proof of this theorem, we need the following result.

\begin{lemma}\label{lem:integral}
    Let $x_1,\dots,x_n\ge 0$ and $y_1,\dots,y_n\in\R$ with
    \[
        \sum_{\ell=j}^nx_{\ell}\le\sum_{\ell=j}^ny_{\ell}
        \quad\text{for all }1\le j\le n\,.
    \]
    For all numbers $\alpha_1,\dots,\alpha_n\in\R$ and $0\le\beta_1\le\beta_2\le\cdots\le\beta_n$ with
    \[
        \alpha_j\le\beta_{j}
        \quad\text{for all }1\le j\le n
    \]
    the inequality
    \[
        \sum_{j=1}^n\alpha_jx_j\le\sum_{j=1}^n\beta_jy_j
    \]
    holds.
\end{lemma}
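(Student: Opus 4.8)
The plan is to prove the inequality in two stages: first replace the possibly negative coefficients $\alpha_j$ by the weights $\beta_j$, and then compare $\sum_{j=1}^n\beta_jx_j$ with $\sum_{j=1}^n\beta_jy_j$ by means of summation by parts (Abel summation) applied to the tail sums that appear in the hypothesis.

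First I would observe that since $x_j\ge 0$ and $\alpha_j\le\beta_j$ for every $j$, we have $\alpha_jx_j\le\beta_jx_j$, and hence $\sum_{j=1}^n\alpha_jx_j\le\sum_{j=1}^n\beta_jx_j$. This step isolates the role of the assumption $\alpha_j\le\beta_j$ and reduces the claim to showing $\sum_{j=1}^n\beta_jx_j\le\sum_{j=1}^n\beta_jy_j$, so that from now on only the monotone nonnegative weights $\beta_j$ and the tail-sum hypothesis matter.

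For the second stage, introduce the tail sums $X_j=\sum_{\ell=j}^n x_\ell$ and $Y_j=\sum_{\ell=j}^n y_\ell$, so that $x_j=X_j-X_{j+1}$ and $y_j=Y_j-Y_{j+1}$ under the convention $X_{n+1}=Y_{n+1}=0$; the hypothesis then reads $X_j\le Y_j$ for all $j$. Summation by parts rewrites the weighted sum as
\[
    \sum_{j=1}^n\beta_jx_j=\beta_1X_1+\sum_{j=2}^n(\beta_j-\beta_{j-1})X_j\,,
\]
and the identical identity holds with each $x$ replaced by $y$ and each $X$ by $Y$. Since $\beta_1\ge 0$ and $\beta_j-\beta_{j-1}\ge 0$ for $j\ge 2$ by the ordering of the $\beta$'s, every coefficient multiplying a tail sum is nonnegative; applying the inequalities $X_j\le Y_j$ term by term therefore yields $\sum_{j=1}^n\beta_jx_j\le\sum_{j=1}^n\beta_jy_j$. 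Combining this with the first stage gives the assertion.

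I do not expect a genuine obstacle here: the statement is a clean consequence of Abel summation, and the only points requiring care are bookkeeping — keeping the boundary term $\beta_1X_1$ separate from the telescoped part, and noting that it is the nonnegativity of the \emph{weights} $\beta_j$ (not of the $y_j$, which may well be negative) that lets the tail-sum inequalities be added up with the correct sign. The crucial structural feature being exploited is simply that monotone nonnegative weights convert a whole family of tail (dominance) inequalities into a single weighted inequality.
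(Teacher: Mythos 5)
Your proof is correct. The paper establishes the lemma by induction on $n$: it subtracts $\beta_1$ from all the $\alpha_j$ and $\beta_j$, discards the first term using $\alpha_1\le\beta_1$ and $x_1\ge 0$, and closes the induction with the $j=1$ tail inequality multiplied by $\beta_1\ge 0$. Unrolling that induction yields precisely the summation-by-parts identity you write down, so the two arguments rest on the same computation, but yours is a direct, non-inductive presentation that cleanly separates the two mechanisms at work: the pointwise replacement of $\alpha_j$ by $\beta_j$ (the only place $x_j\ge 0$ and $\alpha_j\le\beta_j$ are used) and the conversion of the family of tail-dominance inequalities $X_j\le Y_j$ into a single weighted inequality via the nonnegative Abel coefficients $\beta_1,\ \beta_2-\beta_1,\ \dots,\ \beta_n-\beta_{n-1}$ (the only place $0\le\beta_1\le\cdots\le\beta_n$ is used). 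The paper's induction is slightly more compact to write; your decomposition makes the role of each hypothesis more transparent. Both proofs are complete, and your bookkeeping (the convention $X_{n+1}=Y_{n+1}=0$ and the boundary term $\beta_1X_1$) is handled correctly.
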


\begin{proof}[Proof of Lemma~\ref{lem:integral}]
    We prove the claim by induction on~$n$. The case $n=1$ is trivial, thus let $n\ge 1$.
    Ignoring index~$1$ and decreasing the remaining $\alpha_j$ and $\beta_j$ by  $\beta_1$, from the induction hypothesis we obtain
    \[
        \sum_{j=2}^n(\alpha_j-\beta_1)x_j\le\sum_{j=2}^n(\beta_j-\beta_1)y_j\,.
    \]
    Due to $\alpha_1\le\beta_1$ and $x_1\ge 0$ we have $(\alpha_1-\beta_1)x_1\le 0$, thus we deduce
    \[
    \sum_{j=1}^n(\alpha_j-\beta_1)x_j\le\sum_{j=1}^n(\beta_j-\beta_1)y_j\,.
    \]
    Hence we have
    \[
        \sum_{j=1}^n\alpha_jx_j-\sum_{j=1}^n\beta_jy_j\le
        \beta_1\cdot\Big(\sum_{j=1}^nx_j-\sum_{j=1}^ny_j\Big)\,,
    \]
    with a nonpositive right-hand side due to $\beta_1\ge 0$ and $\sum_{j=1}^nx_j\le\sum_{j=1}^ny_j$.
\end{proof}

\begin{proof}[Proof of Theorem~\ref{thm:integral}]
    In order to show that~$\extpoly{p}{q}$ is integral, we show that for an arbitrary objective function vector $c\in\R^{\orbipartinds{p}{q}}\times\R^{\arcs{p}{q}}$ the optimization problem
    \begin{equation}\label{eq:intproof:optprob}
        \max\setdef{\scalprod{c}{(x,y)}}{(x,y)\in\extpoly{p}{q}}
    \end{equation}
    has an optimal solution with 0/1-components.

    We define two vectors $c^{(1)},c^{(2)}\in\R^{\orbipartinds{p}{q}}\times\R^{\arcs{p}{q}}$ which are zero in all components with the following exceptions:
    \[
    \begin{array}{lcll}
        c^{(1)}_{\darc{i-1}{j-1}} & = & c_{(i,j)} & \text{for all }(i,j)\in\orbipartinds{p}{q}\\
        c^{(2)}_{\varc{i-1}{j}}   & = &
                \max\{0,c_{(i,1)},\dots,c_{(i,j)}\} &
                \text{for all }(i,j)\in\orbipartinds{p}{q}\\
    \end{array}
    \]
    We are going to establish the following two claims:
    \begin{enumerate}
        \item For each $(x,y)\in\extpoly{p}{q}$ we have
        \[
            \scalprod{c}{(x,y)}\le\scalprod{c+c^{(1)}+c^{(2)}}{(\zerovec,y)}\,.
        \]
        \item For each $s$-$t$-flow $y\in\flows{p}{q}\cap\{0,1\}^{\arcs{p}{q}}$ there is some $x\in\{0,1\}^{\orbipartinds{p}{q}}$ with
        \[
            (x,y)\in\extpoly{p}{q}
            \quad\text{and}\quad
            \scalprod{c+c^{(1)}+c^{(2)}}{(\zerovec,y)}=\scalprod{c}{(x,y)}\,.
        \]
    \end{enumerate}
    With these two claims, the existence of an integral optimal solution to~\eqref{eq:intproof:optprob} can be established as follows: Let $\tilde{c}\in\R^{\arcs{p}{q}}$ be the $y$-part of $c+c^{(1)}+c^{(2)}$.
    As $\flows{p}{q}$ is a 0/1-polytope, there is a 0/1-flow $y^{\star}\in\flows{p}{q}\cap\{0,1\}^{\arcs{p}{q}}$ with
    \[
    \scalprod{\tilde{c}}{y^{\star}}=\max\setdef{\scalprod{\tilde{c}}{y}}{y\in\flows{p}{q}}\,.
    \]
    Due to $\scalprod{c+c^{(1)}+c^{(2)}}{(\zerovec,y)}=\scalprod{\tilde{c}}{y}$ for all $y\in\flows{p}{q}$, claim~(1) implies that the optimal value of~\eqref{eq:intproof:optprob} is at most $\scalprod{\tilde{c}}{y^{\star}}$. On the other  hand, claim~(2) ensures that there is some $x^{\star}\in\{0,1\}^{\orbipartinds{p}{q}}$ with $(x^{\star},y^{\star})\in\extpoly{p}{q}$ and
    \[
        \scalprod{c}{(x^{\star},y^{\star})}
        =\scalprod{c+c^{(1)}+c^{(2)}}{(\zerovec,y^{\star})}
        =\scalprod{\tilde{c}}{y^{\star}}\,.
    \]
Thus, $(x^{\star},y^{\star})$ is an integral optimal solution to~\eqref{eq:intproof:optprob}.

In order to prove claim~(2), let $y\in\flows{p}{q}\cap\{0,1\}^{\arcs{p}{q}}$, i.e., $y$ is the incidence vector of an $s$-$t$-path in $\digraph{p}{q}$. For the construction of some $x\in\{0,1\}^{\orbipartinds{p}{q}}$ as required we start by initializing $x=\zerovec$. For each $(i,j)\in\nodes{p}{q}$ with $y_{\darc{i}{j}}=1$ set $x_{i+1,j+1}=1$. For every $(i,j)\in\nodes{p}{q}$ with $j\ge 1$ and $y_{\varc{i}{j}}=1$   choose $\ell\in\ints{j}$ with
\[
    c_{i+1,\ell}=\max\{c_{(i+1),1},\dots,c_{(i+1),j}\}\,,
\]
and set $x_{i+1,\ell}=1$ if $c_{i+1,\ell}\ge 0$.

For claim~(1) let $(x,y)\in\extpoly{p}{q}$. Define $x'\in\R^{\orbipartinds{p}{q}}$ via
\[
    x'_{ij}=x_{ij}-y_{\darc{i-1}{j-1}}
\]
for all $(i,j)\in\orbipartinds{p}{q}$. As $(x,y)$ satisfies~\eqref{eq:bindxy},  $x'\ge\zerovec$ holds. Furthermore, we have
\begin{equation}\label{eq:inproof:1}
    \scalprod{c^{(1)}}{(\zerovec,y)}=\scalprod{c}{(x-x',\zerovec)}\,.
\end{equation}
Therefore, it suffices to show
\begin{equation}\label{eq:inproof:2}
    \scalprod{c^{(2)}}{(\zerovec,y)}\ge\scalprod{c}{(x',\zerovec)}\,,
\end{equation}
because~\eqref{eq:inproof:1} and~\eqref{eq:inproof:2} yield
\begin{eqnarray*}
    \scalprod{c+c^{(1)}+c^{(2)}}{(\zerovec,y)}
    & =   &  \scalprod{c}{(\zerovec,y)}
            +\scalprod{c^{(1)}}{(\zerovec,y)}
            +\scalprod{c^{(2)}}{(\zerovec,y)}\\
    & \ge & \scalprod{c}{(\zerovec,y)}+\scalprod{c}{(x-x',\zerovec)}+\scalprod{c}{(x',\zerovec)}\\
    & =   & \scalprod{c}{(x,y)}\,.
\end{eqnarray*}

In order to establish~\eqref{eq:inproof:2}, we prove for every $i\in\ints{p}$
\begin{equation}\label{eq:inproof:3}
    \sum_{j=1}^{q(i)}c_{ij}x'_{ij}\le\sum_{j=1}^{q(i-1)}c^{(2)}_{\varc{i-1}{j}}y_{\varc{i-1}{j}}\,,
\end{equation}
which by summation over $i\in\ints{p}$
yields~\eqref{eq:inproof:2}. To see \eqref{eq:inproof:3} for some
$i\in\ints{p}$, observe that, for every $j\in\ints{q(i)}$, we have
\[
    x'(\barr{i}{j})=x(\barr{i}{j})-y(\diaginarcs{\barr{i}{j}})
    =x(\barr{i}{j})-y(\barr{i}{j})+y(\vertinarcs{\barr{i}{j}})\,.
\]
Due to Remark~\ref{rem:cuts}, and as $(x,y)$
satisfies~\eqref{eq:bindxybar}, this implies
\[
    x'(\barr{i}{j})\le y(\vertinarcs{\barr{i}{j}})\,.
\]
Defining $y_{\varc{i}{q(i)}}=0$ in case of $q(i-1)<q(i)$, we thus
have
\[
    \sum_{\ell=j}^{q(i)}x'_{i\ell}\le\sum_{\ell=j}^{q(i)}y_{\varc{i-1}{\ell}}
\]
for every $j\in\ints{q(i)}$.
Setting  $c^{(2)}_{\varc{i-1}{q(i)}}$ to the biggest component of~$c$ in case of $q(i-1)<q(i)$, we furthermore have
\[
    0\le c^{(2)}_{\varc{i-1}{1}}\le\cdots\le  c^{(2)}_{\varc{i-1}{q(i)}}
\]
and  $c_{ij}\le c^{(2)}_{\varc{(i-1)}{j}}$ for all $j\in\ints{q(i)}$. Thus we can use Lemma~\ref{lem:integral} (with $n=q(i)$, $x_j=x'_{ij}\ge 0$, $y_j=y_{\varc{i-1}{j}}$, $\alpha_j=c_{ij}$, and $\beta_j=c^{(2)}_{\varc{i-1}{j}}$) to deduce
\[
\sum_{j=1}^{q(i)}c_{ij}x'_{ij}\le\sum_{j=1}^{q(i)}c^{(2)}_{\varc{i-1}{j}}y_{\varc{i-1}{j}}\,,
\]
which yields~\eqref{eq:inproof:3}.
\end{proof}

From Theorem~\ref{thm:integral} one obtains that $\extpoly{p}{q}$ is an extended formulation for $\orbipack{p}{q}$.

\begin{theorem}\label{thm:extform}
    The orbitope $\orbipack{p}{q}\subseteq\R^\orbipartinds{p}{q}$ is the orthogonal projection of the polytope $\extpoly{p}{q}\subseteq\R^{\orbipartinds{p}{q}}\times\R^{\arcs{p}{q}}$ to the space  $\R^{\orbipartinds{p}{q}}$.
\end{theorem}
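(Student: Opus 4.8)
The plan is to leverage the integrality of $\extpoly{p}{q}$ (Theorem~\ref{thm:integral}) to reduce the statement to a combinatorial fact about $0/1$-points. Writing $\pi\colon\R^{\orbipartinds{p}{q}}\times\R^{\arcs{p}{q}}\to\R^{\orbipartinds{p}{q}}$, $\pi(x,y)=x$, for the orthogonal projection, the fact that projection commutes with taking convex hulls gives $\pi(\extpoly{p}{q})=\operatorname{conv}\setdef{x}{(x,y)\ \text{a vertex of}\ \extpoly{p}{q}}$, where by Theorem~\ref{thm:integral} each such vertex is integral in both blocks. It therefore suffices to prove two inclusions on the level of integral points: that every integral $(x,y)\in\extpoly{p}{q}$ has $x\in\orbipack{p}{q}$ (giving "$\subseteq$"), and that every vertex of $\orbipack{p}{q}$ equals $\pi(x,y)$ for some $(x,y)\in\extpoly{p}{q}$ (giving "$\supseteq$"). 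Since $\orbipack{p}{q}$ is the convex hull of its vertices and $\pi(\extpoly{p}{q})$ is convex, the two inclusions yield equality.

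For "$\subseteq$" I would fix an integral $(x,y)\in\extpoly{p}{q}$. Then $y$ is the incidence vector of an $s$-$t$-path that visits exactly one node $(i,c_i)$ per row, with $c_0=0$ and $c_i\in\{c_{i-1},c_{i-1}+1\}$, while the row-sum inequalities (already noted to follow from~\eqref{eq:bindxybar}) together with integrality make $x$ a $0/1$-matrix with at most one $1$ per row. Using Remark~\ref{rem:cuts} to rewrite~\eqref{eq:bindxybar} as $x(\barr{i}{j})\le y(\barr{i}{j})$, and noting that $y(\barr{i}{j})$ equals $1$ precisely when $c_i\ge j$, one sees that $x_{ij}=1$ forces $c_i\ge j$; dually,~\eqref{eq:bindxy} forces $x_{i,c_i}=1$ at every row where the path takes a diagonal step (i.e.\ $c_i=c_{i-1}+1$). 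The key point I would then extract is that for each non-empty column~$j$ its topmost $1$ lies exactly in the row where the path first reaches column~$j$: the diagonal step there forces a $1$, and no $1$ can appear higher because above that row the path is still in a column $<j$. As the path reaches columns $1,2,\dots$ in strictly increasing rows, the topmost-$1$ rows of the columns strictly increase and the empty columns come last, which is precisely the statement that the columns of~$x$ are lexicographically decreasing; hence $x\in\orbipack{p}{q}$.

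For "$\supseteq$", given a vertex $M$ of $\orbipack{p}{q}$ I would take the induced $s$-$t$-path mentioned in Section~\ref{sec:setup} and let $y$ be its incidence vector, so the path sits at column $c_i=\max\setdef{j}{t_j\le i}$ in row~$i$, where $t_1<t_2<\cdots$ are the topmost-$1$ rows of the non-empty columns of~$M$. It then remains to check $(M,y)\in\extpoly{p}{q}$: inequality~\eqref{eq:bindxy} holds because a diagonal step into column~$j$ occurs exactly at row $t_j$, where $M_{t_j,j}=1$; and~\eqref{eq:bindxybar}, rewritten via Remark~\ref{rem:cuts} as $M(\barr{i}{j})\le y(\barr{i}{j})$, holds because a $1$ in row~$i$ in some column $\ell\ge j$ forces $t_\ell\le i$ and hence $c_i\ge\ell\ge j$, making the right-hand side equal to~$1$. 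Thus $M=\pi(M,y)\in\pi(\extpoly{p}{q})$.

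I expect the main obstacle to be the "$\subseteq$" direction: the honest work is translating the two coupling inequalities~\eqref{eq:bindxy} and~\eqref{eq:bindxybar} together with the path structure of~$y$ into the lexicographic ordering of the columns, i.e.\ showing that the topmost $1$'s occur in strictly increasing rows. Both the bookkeeping of which rows are forced to carry $1$-entries and the equivalence, for matrices with at most one $1$ per row, between "strictly increasing topmost-$1$ rows with empty columns last" and "lexicographically decreasing columns" must be handled with some care, though neither is deep.
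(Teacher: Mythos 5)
Your proposal is correct and follows essentially the same route as the paper: reduce to $0/1$-points via Theorem~\ref{thm:integral}, lift each vertex of $\orbipack{p}{q}$ along its induced $s$-$t$-path, and derive the lexicographic ordering from the interplay of~\eqref{eq:bindxy} and~\eqref{eq:bindxybar} (the paper phrases this last step as a short contradiction argument, while you argue directly via the topmost-$1$ rows, but the idea is identical). If anything, your explicit description of the lifted path via the rows $t_j$ is slightly more careful than the paper's wording, which literally asks for a path using \emph{all} arcs $\darc{i-1}{j-1}$ with $x_{ij}=1$ even though only the topmost $1$ of each column can contribute a diagonal arc.
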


\begin{proof}
    Let $x\in\orbipack{p}{q}\cap\{0,1\}^{\orbipartinds{p}{q}}$ be an arbitrary vertex of~$\orbipack{p}{q}$.
     The incidence vector $y\in\{0,1\}^{\arcs{p}{q}}$ of the unique $s$-$t$-path using all arcs $\darc{i-1}{j-1}$ with $(i,j)\in\orbipartinds{p}{q}$ and $x_{ij}=1$  satisfies $(x,y)\in\extpoly{p}{q}$.
    Thus, $\orbipack{p}{q}$ is contained in the projection of $\extpoly{p}{q}$.

    To see that vice versa the projection of $\extpoly{p}{q}$ is contained in $\orbipack{p}{q}$, by Theorem~\ref{thm:integral} it suffices to observe that every 0/1-point $(x,y)\in\extpoly{p}{q}$ is contained in $\orbipack{p}{q}$. Clearly, for such a point $x$ has at most one one-entry per row (since the row-sum inequalities are implied by the fact  $(x,y)\in\extpoly{p}{q}$). Furthermore, if the $j$-th column of~$x$ was lexicographically larger than the $(j-1)$-st column of~$x$ with $i$ being minimal such that $x_{ij}=1$ holds,
then one would find that $y(\vbar{i}{j-1})=0$ holds (because of~\eqref{eq:bindxy}), contradicting~\eqref{eq:bindxybar} for $(i,j-1)$.
\end{proof}

From the proof of Theorem \ref{thm:integral}, we derive a
combinatorial algorithm for the linear optimization problem
\begin{equation}\label{optpack}
    \max\setdef{\scalprod{d}{x}}{x\in\orbipack{p}{q}}
\end{equation}
with $d\in\orbipartinds{p}{q}$.
Indeed, with $c=(d,\zerovec)\in\R^{\orbipartinds{p}{q}}\times\R^{\arcs{p}{q}}$ every optimal solution $(x^{\star},y^{\star})$ to
\begin{equation}\label{optpackext}
    \max\setdef{\scalprod{c}{(x,y)}}{(x,y)\in\extpoly{p}{q}}
\end{equation}
yields an optimal solution~$x^{\star}$ to~\eqref{optpack}.
From the proof of Theorem~\ref{thm:integral} we know that we can compute an optimal solution~$(x^{\star},y^{\star})$ to~\eqref{optpackext} by first computing the incidence vector $y^{\star}\in\{0,1\}^{\arcs{p}{q}}$ of a longest $s$-$t$-path in the digraph $\digraph{p}{q}$ with respect to arc length given by $c^{(1)}+c^{(2)}$ (which can be done in linear time since $\digraph{p}{q}$ is acyclic) and then setting~$x^{\star}\in\{0,1\}^{\orbipartinds{p}{q}}$ as described in the proof of claim~(2) (in the proof of Theorem \ref{thm:integral}).

%
%
\begin{corollary}\label{cor:alg}
Linear optimization  over $\orbipack{p}{q}$ can be solved
in time $\bigo{pq}$.
\end{corollary}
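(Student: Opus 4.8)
The plan is to turn the algorithmic recipe already outlined just before the statement into a genuine complexity bound, so the proof is essentially an exercise in running-time bookkeeping. The key structural observation I would record first is that the digraph $\digraph{p}{q}$ is small: since $|\orbipartinds{p}{q}|=\bigo{pq}$ and every node of $\digraph{p}{q}$ has out-degree at most two (one vertical, one diagonal arc), both $|\nodes{p}{q}|$ and $|\arcs{p}{q}|$ are $\bigo{pq}$. Everything else is built on top of this.

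First I would reduce optimization over $\orbipack{p}{q}$ to optimization over the extended formulation: given $d\in\R^{\orbipartinds{p}{q}}$, set $c=(d,\zerovec)$, so that by Theorem~\ref{thm:extform} any optimal $(x^{\star},y^{\star})$ over $\extpoly{p}{q}$ projects to an optimal $x^{\star}$ for the original problem, and by Theorem~\ref{thm:integral} such an optimum may be taken integral. I would then follow the construction in the proof of Theorem~\ref{thm:integral} verbatim. Compute the $y$-part $\tilde{c}=c^{(1)}+c^{(2)}$ of the modified objective: $c^{(1)}$ merely copies each entry $c_{(i,j)}$ onto the diagonal arc $\darc{i-1}{j-1}$, while the running maxima $c^{(2)}_{\varc{i-1}{j}}=\max\{0,c_{(i,1)},\dots,c_{(i,j)}\}$ are obtained by one left-to-right prefix-maximum scan per row, costing $\bigo{q}$ per row and hence $\bigo{pq}$ in total. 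Next, compute a longest $s$-$t$-path in $\digraph{p}{q}$ with respect to $\tilde{c}$; because the digraph is acyclic, a dynamic program along a topological order (processing nodes row by row) runs in time linear in the number of nodes and arcs, i.e.\ $\bigo{pq}$, and yields the incidence vector $y^{\star}$. Finally, reconstruct $x^{\star}$ from $y^{\star}$ as in claim~(2) of that proof: each used diagonal arc $\darc{i}{j}$ sets $x^{\star}_{i+1,j+1}=1$, and each used vertical arc $\varc{i}{j}$ sets $x^{\star}_{i+1,\ell}=1$ at the prefix-max index $\ell$ whenever the corresponding cost is nonnegative. This touches each arc once, costs $\bigo{pq}$, and can be fused with the prefix-max scan.

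Correctness requires no new argument: it is inherited directly from the proofs of Theorems~\ref{thm:integral} and~\ref{thm:extform}, which already establish that the constructed $(x^{\star},y^{\star})$ is an integral optimum of the extended problem and hence that $x^{\star}$ solves the original one. For this reason I do not expect a genuine obstacle here; the only substantive point is the complexity accounting, and the single fact worth stating explicitly is that the longest-path step dominates and is linear precisely because $\digraph{p}{q}$ is a directed \emph{acyclic} graph. Summing the $\bigo{pq}$ costs of the four steps then gives the claimed $\bigo{pq}$ total running time.
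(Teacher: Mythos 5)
Your proposal is correct and follows essentially the same route as the paper, which proves this corollary in the paragraph immediately preceding it: reduce to the extended formulation with $c=(d,\zerovec)$, compute a longest $s$-$t$-path in the acyclic digraph $\digraph{p}{q}$ with respect to $c^{(1)}+c^{(2)}$ in linear time, and recover $x^{\star}$ as in claim~(2) of the proof of Theorem~\ref{thm:integral}. Your additional bookkeeping (node/arc counts, the prefix-maximum scan for $c^{(2)}$) just makes explicit the running-time accounting the paper leaves implicit.
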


\subsection{The partitioning case}
\label{subsec:ext:part}

The previous results can be easily extended to the partitioning
case. Since the row-sum inequalities $x(\row{i})\le 1$ are valid for $\extpoly{p}{q}$,
\[
    \extpolypart{p}{q}=
    \setdef{(x,y) \in \extpoly{p}{q}}%
           {x(\row{i})=1\text{ for all } i \in [p]}
\]
is a face of~$\extpoly{p}{q}$. Clearly, due to Theorem~\ref{thm:extform} this face maps to the face (see Sect.~\ref{sec:intro})
\[
    \setdef{x\in\orbipack{p}{q}}{x(\row{i})=1\text{ for all } i \in [p]}=\orbipart{p}{q}
\]
 of $\orbipack{p}{q}$ via the orthogonal projection onto the $x$-space ~$\R^{\orbipartinds{p}{q}}$.

\begin{corollary}\label{cor:extpartpath}
$\extpolypart{p}{q}$ is an extended formulation for
$\orbipart{p}{q}$.
\end{corollary}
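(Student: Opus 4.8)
The plan is to verify directly that $\extpolypart{p}{q}$ meets the definition of an extended formulation, i.e., that its orthogonal projection onto the $x$-space $\R^{\orbipartinds{p}{q}}$ is exactly $\orbipart{p}{q}$. Almost all of the needed work has in fact already been carried out in the discussion immediately preceding the statement, so the proof reduces to assembling those observations into the two required inclusions and invoking Theorem~\ref{thm:extform}.

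The single idea I would rely on is that the orthogonal projection $\pi(x,y)=x$ commutes with imposing constraints that involve only the $x$-variables. Concretely, since each row-sum equation $x(\row{i})=1$ refers solely to coordinates that survive $\pi$, one has
\[
    \pi(\extpolypart{p}{q})=\setdef{x\in\pi(\extpoly{p}{q})}{x(\row{i})=1\text{ for all }i\in\ints{p}}\,.
\]
For the inclusion ``$\subseteq$'', take $(x,y)\in\extpolypart{p}{q}$; then $(x,y)\in\extpoly{p}{q}$, so $x=\pi(x,y)\in\orbipack{p}{q}$ by Theorem~\ref{thm:extform}, and $x$ inherits the equations $x(\row{i})=1$, placing it in the face $\orbipart{p}{q}$. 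For the reverse inclusion, take $x\in\orbipart{p}{q}\subseteq\orbipack{p}{q}$; since $\orbipack{p}{q}=\pi(\extpoly{p}{q})$ by Theorem~\ref{thm:extform}, there is some $(x,y)\in\extpoly{p}{q}$ projecting to $x$, and because $x$ already satisfies $x(\row{i})=1$ for all $i$, this lift lies in $\extpolypart{p}{q}$, whence $x\in\pi(\extpolypart{p}{q})$.

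There is essentially no hard part here: the statement is a formal consequence of Theorem~\ref{thm:extform} together with the elementary fact that passing to the face cut out by constraints on the projected coordinates is compatible with projection. The only point requiring a moment of care is to confirm that the row-sum equalities indeed involve no $y$-variable, which is immediate from their definition; once that is noted, the two inclusions above close the argument. Accordingly, I would keep the write-up to just a couple of sentences, referring back to the face identification already established before the corollary rather than repeating the inclusions in full detail.
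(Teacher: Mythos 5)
Your argument is correct and is essentially the paper's own: the text preceding the corollary observes that $\extpolypart{p}{q}$ is the face of $\extpoly{p}{q}$ cut out by the row-sum equations, which involve only $x$-coordinates, and hence by Theorem~\ref{thm:extform} projects exactly onto the corresponding face $\orbipart{p}{q}$ of $\orbipack{p}{q}$. Your two inclusions merely spell out this same commutation of projection with $x$-only constraints in more detail.
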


Suppose we want to solve
\begin{equation}\label{optpart}
    \max\setdef{\scalprod{d}{x}}{x\in\orbipart{p}{q}}
\end{equation}
for some $d\in\R^{\orbipartinds{p}{q}}$. As all points $x\in\orbipart{p}{q}$ satisfy the row-sum equations $x(\row{i})=1$ for all $i\in\ints{p}$, we may add, for each $i$, an arbitrary constant to the objective function coefficients of the variables belonging to $\row{i}$ without changing the optimal solutions to~\eqref{optpart} (though, of course, changing the objective function values of the solutions). Therefore, we may assume that~$d$ has only positive components. But then
\[
    \max\setdef{\scalprod{d}{x}}{x\in\orbipart{p}{q}}=\max\setdef{\scalprod{d}{x}}{x\in\orbipack{p}{q}}\,,
\]
and all optimal solutions to the optimization problem over~$\orbipack{p}{q}$ are points in~$\orbipart{p}{q}$. Thus we derive the following from Corollary~\ref{cor:alg}.
\begin{corollary}\label{cor:alg:part}
Linear optimization  over $\orbipart{p}{q}$ can be solved
in time $\bigo{pq}$.
\end{corollary}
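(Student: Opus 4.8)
The plan is to reduce the partitioning optimization problem to the packing case already handled by Corollary~\ref{cor:alg}, exploiting the degree of freedom offered by the row-sum equations. First I would observe that every feasible point $x\in\orbipart{p}{q}$ satisfies $x(\row{i})=1$ for each $i\in\ints{p}$, so adding an arbitrary constant $\lambda_i$ to all objective coefficients $d_{ij}$ with $(i,j)\in\row{i}$ changes the objective value of \emph{every} feasible solution by the same amount $\sum_i\lambda_i$. Consequently the set of optimal solutions to~\eqref{optpart} is unchanged under such a shift. This is the conceptual heart of the argument, and it is essentially a one-line observation once the row-sum equations are in force.

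Next I would choose the constants so as to make the modified objective vector strictly positive: for each row~$i$ pick $\lambda_i$ larger than $\max_j(-d_{ij})$, so that the shifted coefficients $d_{ij}+\lambda_i$ are all positive. Call the resulting vector $d$ again (abusing notation as the excerpt does). With a strictly positive objective, I would argue that optimizing over the larger polytope $\orbipack{p}{q}$ automatically lands in the face $\orbipart{p}{q}$: since $\orbipart{p}{q}\subseteq\orbipack{p}{q}$, the packing optimum is at least the partitioning optimum; and conversely, any packing-optimal vertex $x$ with some row satisfying $x(\row{i})<1$ could be improved by routing more weight into that row, because all coefficients are positive and the lexicographic/column structure permits filling an empty slot—so an optimal packing solution must in fact satisfy all row-sum equations and hence lie in $\orbipart{p}{q}$. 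This yields
\[
    \max\setdef{\scalprod{d}{x}}{x\in\orbipart{p}{q}}=\max\setdef{\scalprod{d}{x}}{x\in\orbipack{p}{q}}
\]
with matching optimal solution sets.

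Finally I would invoke Corollary~\ref{cor:alg}, which solves linear optimization over $\orbipack{p}{q}$ in time $\bigo{pq}$, and note that the preprocessing (computing the shifts $\lambda_i$, which requires one pass over the $\orbipartinds{p}{q}$ coefficients) costs only $\bigo{pq}$ additional time; the postprocessing is trivial since the returned optimizer already lies in $\orbipart{p}{q}$. Hence the whole procedure runs in $\bigo{pq}$.

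The step I expect to require the most care is the claim that a strictly positive objective forces the packing optimum into the partitioning face. The intuitive ``fill an empty row'' argument must be reconciled with the lexicographic-column constraint defining $\orbipack{p}{q}$: one cannot naïvely set an arbitrary entry to $1$ without possibly violating the column-ordering condition. The cleanest way around this is to avoid an explicit exchange argument altogether and instead appeal to the combinatorial algorithm from the proof of Theorem~\ref{thm:integral}: with $d>\zerovec$, the construction in claim~(2) always chooses $x_{i+1,\ell}=1$ (the guard $c_{i+1,\ell}\ge 0$ is automatically met), so every row of the produced optimizer contains exactly one one-entry, placing it directly in $\orbipart{p}{q}$. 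This bypasses the delicate feasibility issue and confirms both the equality of optima and the $\bigo{pq}$ running time.
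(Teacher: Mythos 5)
Your proposal is correct and follows essentially the same route as the paper: use the row-sum equations to shift the objective row-wise until all coefficients are positive, observe that the packing optimum then lies in the face $\orbipart{p}{q}$, and invoke Corollary~\ref{cor:alg}, with only $\bigo{pq}$ preprocessing. The one point to watch in your final paragraph is that the algorithm of claim~(2) sets no $x$-entry in a row traversed by a column-zero vertical arc, so you still need the (easy) observation that with $d>\zerovec$ the longest path leaves column zero immediately --- or, more simply, the exchange argument that placing a $1$ in column~$1$ of an empty row preserves membership in $\orbipack{p}{q}$ and strictly improves the objective.
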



\subsection{Reducing the number of variables and nonzero elements}
\label{subsec:ext:reduce}

Let us manipulate the defining system of $\extpoly{p}{q}$ in order
to decrease the number of variables and nonzero coefficients. This may be advantageous for practical purposes. It furthermore emphasizes the simplicity of the extended formulation. For the
sake of readability, we define $\barr{i}{j}=\varnothing$ and
$\vbar{i}{j}=\varnothing$ whenever $j>q(i)$.

Since  every $y\in\flows{p}{q}$ satisfies  $y_{\varc{i}{j}} = y(\barr{i}{j}) -
y(\barr{i+1}{j+1})$, we deduce from
Remark~\ref{rem:cuts} that for all $(x,y)\in\extpoly{p}{q}$
\begin{equation*}
        y_{\varc{i}{j}} =y(\diaginarcs{\vbar{i}{j}}) -
        y(\diaginarcs{\vbar{i+1}{j+1}})
\end{equation*}
holds for all vertical arcs $\varc{i}{j}$ with $(i,j)\in\orbipartinds{p}{q}$ (and $i<p$) as well as
\begin{equation*}
y_{((p,j),t)}=y(\diaginarcs{\vbar{p}{j}})-
y(\diaginarcs{\vbar{p}{j+1}})
\end{equation*}
for all arcs $((p,j),t)$ with $j\in\intszero{q}$, where we defined
$y(\diaginarcs{\vbar{p}{q+1}})=0$. Similarly to the derivation of
Remark~\ref{rem:cuts}, one furthermore deduces that every
$(x,y)\in\extpoly{p}{q}$ satisfies
\begin{equation*}
    y_{\varc{i}{0}}=1-y(\diaginarcs{\vbar{i+1}{1}})
\end{equation*}
for all $i\in\intszero{p-1}$.
Finally, every $(x,y)\in\extpoly{p}{q}$ clearly satisfies
    $y_{(s,(0,0))}=1$.
Therefore, we can eliminate from the
system describing~$\extpoly{p}{q}$ all arc variables except for the ones corresponding to diagonal arcs.

We finally apply the linear transformation defined by
\[
    z_{ij}=x(\barr{i}{j}) \quad\text{and}\quad w_{ij}=y(\diaginarcs{\vbar{i}{j}})\quad\hbox{ for all } (i,j)\in\orbipartinds{p}{q}
\,,
\]
to $\R^{\orbipartinds{p}{q}}\times\R^{\diagarcs{p}{q}}$,
whose inverse is given by
\[
    x_{ij}=z_{i,j}-z_{i,j+1} \quad\text{for all }(i,j)\in\orbipartinds{p}{q}\,.
\]
(defining $z_{i,q(i)+1}=0$, for all $i \in [p]$) and
\[
y_{\darc{i}{j}}=w_{i+1,j+1}-w_{i,j+1}\quad\text{for all } i \in [p-1]_0, j \in
[q(i+1)-1]_0\,.
\]

Few calculations are needed to check that the previous
transformation (bijectively) maps $\extpoly{p}{q}$ onto the polytope $\extpolycompact{p}{q}\subseteq\R^{\orbipartinds{p}{q}}\times\R^{\orbipartinds{p}{q}}$ defined by
the following ''very compact'' set of constraints:
\begin{eqnarray}\label{NEW}
w_{i+1,j+1} - w_{i,j+1} & \geq 0 & \hbox{for }    i \in [p-1]_0, j \in[q(i+1)-1]_0 \label{NEF:F}\\
 w_{i,j} - w_{i+1,j+1} & \geq  0 & \hbox{for } (i,j)\in\orbipartinds{p}{q}, i<p \label{NEF:G}\\
 w_{p,1} & \leq  1 \label{NEF:H}\\
 w_{i,j} -w_{i-1,j} - z_{ij}+z_{i,j+1} &\le 0 & \hbox{for } (i,j)\in\orbipartinds{p}{q}  \label{NEF:I}\\
 z_{i,j} -  w_{i,j}& \leq 0 & \hbox{for } (i,j)\in\orbipartinds{p}{q} \label{NEF:J}\\
 w_{i,q(i)} & \geq  0& \hbox{for } i \in [p] \label{NEF:L}
\end{eqnarray}
Here, \eqref{NEF:F}  represent the nonnegativity constraints on the diagonal arcs. Nonnegativity on the vertical arcs $\varc{i}{j}$ with $i\in\intszero{p-1}$  is reflected by~\eqref{NEF:G} for $j\in\ints{q(i)}$ and by~\eqref{NEF:H} (together with the nonnegativity of~$w$, which is implied by~\eqref{NEF:L} and~\eqref{NEF:G}) for $j=0$. Finally, equations~\eqref{eq:bindxy} and~\eqref{eq:bindxybar} translate to~\eqref{NEF:I} and~\eqref{NEF:J}, respectively.

Ignoring the nonnegativity constraints,  system~\eqref{NEF:F}--\eqref{NEF:L} has less than $2pq$ variables and $4pq$ constraints,
for a total number of nonzero coefficients that is smaller than $10pq$.

Note that $w_{1,1}\le 1$ is a valid inequality
for~$\extpolycompact{p}{q}$. The face of~$\extpolycompact{p}{q}$
defined by $w_{1,1}= 1$ is the image of the face
$\extpolypart{p}{q}$ of $\extpoly{p}{q}$. Thus, adding~$w_{1,1}=
1$ to the system~\eqref{NEF:F}--\eqref{NEF:L} one arrives at
another extended formulation of $\orbipart{p}{q}$.

We summarize the results of this subsection.
\begin{theorem}\label{thm:verycompact}
    The polytope $\extpolycompact{p}{q}\subseteq\R^{\orbipartinds{p}{q}}\times\R^{\orbipartinds{p}{q}}$ defined by~\eqref{NEF:F}--\eqref{NEF:L} is an extended formulation of~$\orbipack{p}{q}$. The face of~$\extpolycompact{p}{q}$ defined by $w_{1,1}= 1$ is an extended formulation of~$\orbipart{p}{q}$.
\end{theorem}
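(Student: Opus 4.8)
The plan is to obtain the theorem by transporting the projection statement of Theorem~\ref{thm:extform} through the change of coordinates that produces $\extpolycompact{p}{q}$. The key point is that passing from $\extpoly{p}{q}$ to $\extpolycompact{p}{q}$ is an affine isomorphism which leaves the projection onto the $x$-space essentially untouched, so that once I know $\extpoly{p}{q}$ projects onto $\orbipack{p}{q}$, the same follows for $\extpolycompact{p}{q}$.

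For the packing statement I would proceed in three routine steps. First, the identities displayed just before the theorem express the flow on every vertical arc, on $(s,(0,0))$, and on every arc $((p,j),t)$ as a fixed affine function of the diagonal-arc variables, valid on all of $\extpoly{p}{q}$; hence forgetting the non-diagonal flow coordinates is injective on $\extpoly{p}{q}$ and yields an affinely isomorphic polytope $\widehat P\subseteq\R^{\orbipartinds{p}{q}}\times\R^{\diagarcs{p}{q}}$ with the same image $\orbipack{p}{q}$ under the projection onto $\R^{\orbipartinds{p}{q}}$. Second, the map $(x,y)\mapsto(z,w)$ with $z_{ij}=x(\barr{i}{j})$ and $w_{ij}=y(\diaginarcs{\vbar{i}{j}})$ is linear and admits the inverse exhibited in the text (with $z_{i,q(i)+1}=0$); as $\diagarcs{p}{q}$ is in bijection with $\orbipartinds{p}{q}$, the two spaces have equal dimension and the map is an isomorphism. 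Third, I would substitute these relations into the constraints defining $\widehat P$ and check family by family that diagonal-arc nonnegativity becomes~\eqref{NEF:F}, vertical-arc nonnegativity becomes~\eqref{NEF:G} and, for the column-$0$ arcs, \eqref{NEF:H} (using Remark~\ref{rem:cuts} and the monotonicity of $w$ in the row index), the nonnegativity of the remaining (sink) arcs and of $w$ itself is captured by~\eqref{NEF:L} in combination with~\eqref{NEF:G}, and~\eqref{eq:bindxy},~\eqref{eq:bindxybar} become~\eqref{NEF:I},~\eqref{NEF:J}; conversely \eqref{NEF:F}--\eqref{NEF:L} are exactly these images, so the isomorphism carries $\widehat P$ onto $\extpolycompact{p}{q}$. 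Since $x_{ij}=z_{ij}-z_{i,j+1}$ recovers $x$ from $z$ alone, the composite projection sends $\extpolycompact{p}{q}$ onto $\orbipack{p}{q}$, proving the first assertion.

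For the partitioning statement I would begin from Corollary~\ref{cor:extpartpath}: the face $\extpolypart{p}{q}=\setdef{(x,y)\in\extpoly{p}{q}}{x(\row{i})=1\text{ for all }i\in\ints{p}}$ projects onto $\orbipart{p}{q}$. Under the isomorphism above it maps to a face $F$ of $\extpolycompact{p}{q}$, and since the projection is unchanged $F$ still projects onto $\orbipart{p}{q}$. It then remains to identify $F$ with the face cut out by $w_{1,1}=1$. Validity of $w_{1,1}\le 1$ follows by chaining~\eqref{NEF:F} to get $w_{1,1}\le w_{p,1}$ and then applying~\eqref{NEF:H}; the inclusion $F\subseteq\{w_{1,1}=1\}$ is immediate, since on $\extpolypart{p}{q}$ one has $z_{1,1}=x(\row{1})=1$, whence~\eqref{NEF:I} and~\eqref{NEF:J} at $(1,1)$ force $w_{1,1}=1$.

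The hard part will be the reverse inclusion: showing that the single equation $w_{1,1}=1$ already forces a point of $\extpolycompact{p}{q}$ into $F$. The natural route is to chain $w_{1,1}\le w_{2,1}\le\cdots\le w_{p,1}\le 1$ to conclude $w_{i,1}=1$ for every $i$, which in flow terms says $y(\row{i})=1$ for all $i$. The delicate step—and the one I would scrutinise most—is the passage from these flow-level equalities to the $x$-level equalities $x(\row{i})=1$, i.e.\ $z_{i,1}=1$, that define $\extpolypart{p}{q}$: inequality~\eqref{NEF:J} only yields $z_{i,1}\le w_{i,1}=1$, so obtaining $x(\row{i})\le 1$ is effortless while forcing equality is not. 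Making this last implication airtight is where a naive argument is most likely to break down, and it is the one genuinely load-bearing point of the whole proof.
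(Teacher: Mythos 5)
Your treatment of the packing statement is correct and is exactly the paper's (the paper compresses your three steps into the remark that ``few calculations are needed''): eliminate the non-diagonal arc variables via the displayed identities, apply the linear bijection $(x,y)\mapsto(z,w)$, and read off \eqref{NEF:F}--\eqref{NEF:L}. The one detail worth writing out in your step three is the surjectivity direction: for a point $(z,w)$ satisfying \eqref{NEF:F}--\eqref{NEF:L}, the reconstructed arc values satisfy flow conservation automatically (a telescoping computation at each node), so the system is the exact image of $\extpoly{p}{q}$ and not a proper relaxation.

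For the partitioning statement, the step you flag as load-bearing does not just resist a naive argument --- it is false, so no argument will close it. Take $p=q=2$ and the point $z_{1,1}=1$, $z_{2,1}=z_{2,2}=0$, $w_{1,1}=w_{2,1}=1$, $w_{2,2}=0$: all of \eqref{NEF:F}--\eqref{NEF:L} hold and $w_{1,1}=1$, yet the projection $x_{1,1}=1$, $x_{2,1}=x_{2,2}=0$ violates $x(\row{2})=1$, so this point lies in the face $\{w_{1,1}=1\}$ but not in the image of $\extpolypart{2}{2}$, and the face does not project onto $\orbipart{2}{2}$. In the original coordinates this is the packing vertex whose only one-entry is $x_{1,1}$, paired with the path $s\to(0,0)\to(1,1)\to(2,1)\to t$: since every lift of this vertex must have $y_{\darc{0}{0}}=1$ by~\eqref{eq:bindxybar} at $(1,1)$, its entire fibre sits inside the face, while the path enters $(2,1)$ by a vertical arc, so~\eqref{eq:bindxy} gives no lower bound on row~$2$ of~$x$; the condition $w_{i,1}=1$ forces the \emph{flow}, not $x$, to have unit mass in every row --- precisely the distinction you isolate. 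The paper offers no proof at this point; it simply asserts that the face $\{w_{1,1}=1\}$ is the image of $\extpolypart{p}{q}$, and that assertion is incorrect. The theorem can be repaired: each $z_{i,1}\le 1$ is valid (chain \eqref{NEF:J}, \eqref{NEF:F} and \eqref{NEF:H}), hence $\sum_{i=1}^{p}z_{i,1}\le p$ is valid, and its face is exactly $\setdef{(z,w)\in\extpolycompact{p}{q}}{z_{i,1}=1\text{ for all }i\in\ints{p}}$, which is the image of $\extpolypart{p}{q}$ and therefore an extended formulation of $\orbipart{p}{q}$.
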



\section{The projection}
\label{sec:project}

Let $\scipoly{p}{q}\subseteq\R^{\orbipartinds{p}{q}}$ be the polytope defined by the nonnegativity constraints $x\ge\zerovec$, the row-sum inequalities
$x(\row{i})\le 1$ for all $i\in\ints{p}$
and all shifted-column inequalities. By checking the vertices (0/1-vectors) of $\orbipack{p}{q}$ it is easy to see that $\orbipack{p}{q}\subseteq\scipoly{p}{q}$ holds. Thus, in order to prove
\begin{theorem}\label{thm:orbipackSCI}
    $\orbipack{p}{q}=\scipoly{p}{q}$
\end{theorem}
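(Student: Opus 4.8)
The goal is to prove $\orbipack{p}{q}=\scipoly{p}{q}$. Since the inclusion $\orbipack{p}{q}\subseteq\scipoly{p}{q}$ is already noted (the SCIs and row-sum inequalities are valid for the orbitope, checkable on 0/1-vertices), the remaining task is the reverse inclusion $\scipoly{p}{q}\subseteq\orbipack{p}{q}$. The natural strategy, given everything built up in the paper, is to route this through the extended formulation: by Theorem~\ref{thm:extform} the orbitope is exactly the projection of $\extpoly{p}{q}$, so it suffices to show that every $x\in\scipoly{p}{q}$ lifts to some $(x,y)\in\extpoly{p}{q}$. This is precisely the content advertised as Theorem~\ref{thm:liftscipoly} (''the projection of $\extpoly{p}{q}$ contains $\scipoly{p}{q}$''). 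So the plan is: assume Theorem~\ref{thm:liftscipoly}, which yields $\scipoly{p}{q}\subseteq\proj(\extpoly{p}{q})=\orbipack{p}{q}$, and combine it with the trivial inclusion to get equality.

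\medskip

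The real work is therefore to construct, for a given $x\in\scipoly{p}{q}$, a flow $y\in\flows{p}{q}$ with $(x,y)$ satisfying~\eqref{eq:bindxy} and~\eqref{eq:bindxybar}. First I would define $y$ on the diagonal arcs directly from $x$ by setting $y_{\darc{i-1}{j-1}}=x_{ij}$ for all $(i,j)\in\orbipartinds{p}{q}$; this makes~\eqref{eq:bindxy} hold with equality by construction. The vertical-arc and terminal-arc values are then forced (up to the flow-conservation constraints) — I would define them via the telescoping identities derived in Sect.~\ref{subsec:ext:reduce}, namely $y_{\varc{i}{j}}=y(\barr{i}{j})-y(\barr{i+1}{j+1})$ for $j\ge 1$ and $y_{\varc{i}{0}}=1-y(\barr{i+1}{1})$, together with $y_{(s,(0,0))}=1$. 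With $y$ so defined, flow conservation at each node must be verified, and crucially all arc values must be shown nonnegative and at most one; here is where I would invoke that $x\ge\zerovec$ and the row-sum inequalities $x(\row{i})\le 1$ guarantee $\zerovec\le y\le\onevec$ and conservation, so that $y$ is genuinely in $\flows{p}{q}$.

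\medskip

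The main obstacle is establishing inequalities~\eqref{eq:bindxybar}, i.e.\ $x(\barr{i}{j})\le y(\diaginarcs{\vbar{i}{j}})$. Using Remark~\ref{rem:cuts}, $y(\diaginarcs{\vbar{i}{j}})=y(\barr{i}{j})$, and since $y$ was built so that $y(\barr{i}{j})$ equals the sum of $x_{i\ell}$ over the diagonal arcs entering column-positions $(\ell,\ell)$ down to $(i,j)$ along shifted paths, inequality~\eqref{eq:bindxybar} should reduce exactly to an SCI. Concretely, $y(\diaginarcs{\vbar{i}{j}})$ will expand (via Lemma~\ref{lem:cuts}, part~(1), applied to the diagonal path terminating at $(i-1,j-1)$) into an expression of the form $x(S)$ for the shifted column $S=\snodeset{\Gamma}$ of an appropriate $(\ell,\ell)$-$(i-1,j-1)$-path $\Gamma$, minus nonnegative vertical-arc flow terms. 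Thus~\eqref{eq:bindxybar} will follow from the SCI $x(\barr{i}{j})\le x(S)$ once I choose the path $\Gamma$ whose $\snodeset{\Gamma}$ realizes the tightest such shifted column. The delicate part is the bookkeeping: matching each diagonal inflow arc of $\vbar{i}{j}$ to the correct $x$-variable, checking that the telescoping leaves only nonnegative leftover terms, and confirming that the resulting inequality is genuinely one of the shifted-column inequalities in the definition of $\scipoly{p}{q}$ rather than a strictly weaker relaxation. I expect this correspondence between diagonal cuts and shifted columns — essentially a restatement of Lemma~\ref{lem:cuts} — to carry the proof, with the remaining steps being the routine verification that the constructed $y$ is a valid flow.
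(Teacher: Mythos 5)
Your top-level reduction is exactly the paper's: prove $\orbipack{p}{q}\subseteq\scipoly{p}{q}$ on vertices, and obtain the converse by lifting each $x\in\scipoly{p}{q}$ to a point of $\extpoly{p}{q}$ and invoking Theorem~\ref{thm:extform}. The gap is in the lift itself. Setting $y_{\darc{i-1}{j-1}}=x_{ij}$ on \emph{every} diagonal arc and letting conservation determine the vertical arcs does not produce a nonnegative flow, not even when $x$ is a vertex of the orbitope. Take $p=3$, $q=2$ and the vertex $x$ with $x_{11}=x_{22}=x_{32}=1$ and all other entries $0$ (this satisfies all SCIs and row-sum inequalities). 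Your assignment forces $y_{\varc{0}{0}}=y_{\varc{1}{1}}=0$, so node $(2,1)$ receives zero flow while you demand it emit $y_{\darc{2}{1}}=x_{32}=1$; conservation then gives $y_{\varc{2}{1}}=-1$. So the claim that nonnegativity of $x$ and the row-sum inequalities guarantee $\zerovec\le y\le\onevec$ is false, and the constructed $y$ is not in $\flows{p}{q}$. The point you are missing is that \eqref{eq:bindxy} is an inequality, not an equation: a one-entry $x_{ij}=1$ of a vertex may be reached by the vertex's path through the \emph{vertical} arc into $(i,j)$, in which case $y_{\darc{i-1}{j-1}}=0<x_{ij}$. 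The paper instead treats $x_{ij}$ only as a \emph{capacity} on $\darc{i-1}{j-1}$ and builds $y$ by rerouting flow from the column-zero path, maintaining the complementarity condition \eqref{eq:flowcond} (a vertical arc carries flow only if the diagonal arc at its tail is saturated) rather than full saturation.

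The second half of your argument also leaves the essential step open. Even granting a valid flow, Lemma~\ref{lem:cuts}(1) gives $y(\barr{i}{j})=y(\diaginarcs{\snodeset{\Gamma}})-y(\vertoutarcs{\tnodeset{\Gamma}})$ for any admissible path $\Gamma$, and to conclude $y(\barr{i}{j})\ge x(\barr{i}{j})$ from an SCI one must exhibit a specific $\Gamma$ for which simultaneously (a) every arc of $\diaginarcs{\snodeset{\Gamma}}$ is saturated, so that $y(\diaginarcs{\snodeset{\Gamma}})=x(\snodeset{\Gamma})$, and (b) $y(\vertoutarcs{\tnodeset{\Gamma}})=0$. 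Saying you will ``choose the path whose shifted column is tightest'' does not produce such a $\Gamma$; the paper gets both properties at once by growing $\Gamma$ backwards from $(i,j)$ in the residual network, preferring unsaturated diagonal arcs, with (b) following from \eqref{eq:flowcond} and (a) from the preference rule (plus a small case distinction on whether the last arc of $\Gamma$ is diagonal or vertical, and the case $\ell=0$ handled by the row-sum inequality). Without the residual-network construction and the complementarity invariant, neither (a) nor (b) is available, so the reduction of \eqref{eq:bindxybar} to an SCI does not go through.
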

\noindent (which is Prop.~13 in~\cite{KP08}) it suffices (due to
Theorem~\ref{thm:extform}) to show the following:
\begin{theorem}\label{thm:liftscipoly}
    For each $x\in\scipoly{p}{q}$ there is some $y\in\flows{p}{q}$ with $(x,y)\in\extpoly{p}{q}$.
\end{theorem}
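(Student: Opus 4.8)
The plan is to build the flow $y$ by hand and verify the two defining families of inequalities of $\extpoly{p}{q}$ directly. The starting observation is that, by Remark~\ref{rem:cuts}, a flow $y\in\flows{p}{q}$ satisfies~\eqref{eq:bindxybar} if and only if $y(\barr{i}{j})\ge x(\barr{i}{j})$ for all $(i,j)\in\orbipartinds{p}{q}$, while~\eqref{eq:bindxy} reads $y(\barr{i}{j})-y(\barr{i-1}{j})=y_{\darc{i-1}{j-1}}\le x_{ij}$. This suggests parametrizing a prospective flow entirely through the quantities $R_{ij}:=y(\barr{i}{j})$: using the identities for $y_{\varc{i}{j}}$, $y_{\varc{i}{0}}$ and $y_{((p,j),t)}$ in terms of the $y(\diaginarcs{\vbar{\cdot}{\cdot}})=R_{\cdot,\cdot}$ derived in Section~\ref{subsec:ext:reduce}, every arc flow becomes an affine function of $R$, and one checks that this assignment is automatically flow-conservative of value one. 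Thus the task reduces to finding numbers $R_{ij}$, $(i,j)\in\orbipartinds{p}{q}$, with: $R_{ij}\ge x(\barr{i}{j})$ (this is~\eqref{eq:bindxybar}); $R_{ij}\ge R_{i-1,j}$ and $R_{ij}\ge R_{i+1,j+1}$ (nonnegativity of the diagonal and vertical arcs); $R_{p,1}\le 1$ (nonnegativity of the arcs in column~$0$); and $R_{ij}-R_{i-1,j}\le x_{ij}$ (this is~\eqref{eq:bindxy}), reading $R_{j-1,j}=0$.

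The next step is to commit to the smallest candidate compatible with the lower bounds. I define
\[
    R_{ij}=\max\setdef{x(\barr{a}{b})}{b\ge j,\ b\le a\le i+b-j}\,,
\]
the maximum of $x(\barr{a}{b})$ over the staircase region of valid cells right of column~$j$ and on or above the diagonal through $(i,j)$. Because the region of $(i-1,j)$ and that of $(i+1,j+1)$ are both contained in the region of $(i,j)$, the inequalities $R_{ij}\ge x(\barr{i}{j})$, $R_{ij}\ge R_{i-1,j}$ and $R_{ij}\ge R_{i+1,j+1}$ hold by construction, which disposes of~\eqref{eq:bindxybar} and of the nonnegativity of the diagonal and vertical arcs. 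The bound $R_{p,1}\le 1$ is immediate as well, since every term $x(\barr{a}{b})$ entering $R_{p,1}$ is at most $x(\row{a})\le 1$ by a row-sum inequality. So far only nonnegativity and the row-sum inequalities from the definition of $\scipoly{p}{q}$ have been used.

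The whole force of the shifted-column inequalities is concentrated in the remaining increment bound $R_{ij}-R_{i-1,j}\le x_{ij}$. Since the region of $(i,j)$ exceeds that of $(i-1,j)$ exactly by the cells $(i+b-j,b)$ with $b\ge j$, this amounts to showing $x(\barr{i+b-j}{\,b})\le x_{ij}+R_{i-1,j}$ for every admissible $b\ge j$. I would derive this from a single shifted-column inequality attached to the bar $\barr{i+b-j}{\,b}$, choosing the shifted column $S=\snodeset{\Gamma}$ of a path $\Gamma$ that first runs vertically down column~$j$ through the cell $(i,j)$ and then follows the staircase realizing $R_{i-1,j}=x(\barr{a}{b'})$ for the maximizing cell $(a,b')$ of the region of $(i-1,j)$. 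The role of Lemma~\ref{lem:cuts} (which has not been needed up to this point and is evidently tailored to exactly this purpose) is to translate the combinatorial weight $x(S)$ of such a path back into the arithmetic quantity $x_{ij}+R_{i-1,j}$, so that the shifted-column inequality $x(\barr{i+b-j}{\,b})\le x(S)$, valid because $x\in\scipoly{p}{q}$, yields precisely the desired estimate.

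The main obstacle is this last matching step: exhibiting, for each relevant bar, a legitimate shifted column whose vertices split as the single cell $(i,j)$ together with a staircase of total $x$-weight bounded by $R_{i-1,j}$, and checking via Lemma~\ref{lem:cuts} that its weight telescopes correctly. Identifying which staircase realizes the region maximum, and verifying that the corresponding path is an admissible $(\ell,\ell)$-$(i+b-j-1,b-1)$-path in $\digraph{p}{q}$, is where the real work lies; the naive choice $S=\vbar{i}{j}$ is too weak, since $x(\vbar{i}{j})$ may far exceed $x_{ij}+R_{i-1,j}$. As an alternative that sidesteps guessing the path, one could phase the argument through linear-programming duality: the flow we seek exists unless there is a Farkas certificate combining flow conservation with~\eqref{eq:bindxy} and~\eqref{eq:bindxybar}, and one would then argue that every such certificate reduces, again through Lemma~\ref{lem:cuts}, to a violated shifted-column (or row-sum, or nonnegativity) inequality, contradicting $x\in\scipoly{p}{q}$.
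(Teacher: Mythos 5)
Your reduction of the problem to finding numbers $R_{ij}$ satisfying $R_{ij}\ge x(\barr{i}{j})$, the monotonicity relations $R_{ij}\ge R_{i-1,j}$ and $R_{ij}\ge R_{i+1,j+1}$, the bound $R_{p,1}\le 1$, and the increment bound $R_{ij}-R_{i-1,j}\le x_{ij}$ is a legitimate reformulation (it is exactly the system \eqref{NEF:F}--\eqref{NEF:L} with $z$ fixed). The fatal problem is your choice of $R$. You call it ``the smallest candidate compatible with the lower bounds,'' but it only incorporates the lower bounds obtained by chaining the monotonicity relations with a single application of \eqref{eq:bindxybar}; the constraint \eqref{eq:bindxy}, read as $R_{i-1,j}\ge R_{ij}-x_{ij}$, is \emph{also} a lower bound on $R_{i-1,j}$, and the true minimal feasible $R$ must account for chains that alternate reachability moves with subtractions of single entries $x_{ij}$ --- these chains are precisely the shifted columns. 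Your formula omits them, and it is actually false, already for points of the orbitope itself. Take $p=4$, $q=2$ and $x$ with $x_{11}=x_{21}=\tfrac12$, $x_{42}=1$ and all other entries zero; this is the midpoint of two vertices of $\orbipack{4}{2}$ (ones at $(1,1),(4,2)$ and at $(2,1),(4,2)$ respectively), hence lies in $\scipoly{4}{2}$. Your formula gives $R_{2,1}=\max\{x_{11},\,x_{21}+x_{22},\,x_{22},\,x_{32}\}=\tfrac12$, while $(4,2)$ lies in the region of $(3,1)$, so $R_{3,1}\ge x(\barr{4}{2})=1$. Then $R_{3,1}-R_{2,1}\ge\tfrac12>0=x_{31}$, violating the increment bound; equivalently, your flow would put $y_{\darc{2}{0}}=\tfrac12>x_{31}$, violating \eqref{eq:bindxy}. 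So the step you yourself flag as ``where the real work lies'' is not merely unproven --- it cannot be proven for this candidate. (Any correct $w$ for this $x$ must have $w_{2,1}\ge w_{4,2}-x_{31}-x_{... }\ge 1$, via exactly such a mixed chain.) The Farkas alternative you mention is a reasonable fallback but is not carried out.

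For comparison, the paper avoids writing down a closed formula for the flow. It constructs $y$ by a greedy rerouting procedure that enforces the complementarity condition \eqref{eq:flowcond} (a vertical arc carries flow only if the diagonal arc at its tail is saturated), which gives \eqref{eq:bindxy} for free; it then verifies $y(\barr{i}{j})\ge x(\barr{i}{j})$ by tracing a path backwards from $(i,j)$ through the residual network, preferring diagonal arcs, and applying Lemma~\ref{lem:cuts} together with a \emph{single} shifted-column inequality whose shifted column is read off from that path. If you want to salvage your explicit approach, the correct minimal candidate is $R_{ij}=\max\bigl(\max_{(a,b)}x(\barr{a}{b}),\ \max_{S}(x(\barr{a}{b})-x(S'))\bigr)$ over appropriate bars and partial shifted columns $S'$, i.e., a shortest-path computation in the difference-constraint digraph of \eqref{NEF:F}--\eqref{NEF:L}; verifying its feasibility then requires essentially the same shifted-column bookkeeping the paper performs via its residual path.
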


\begin{proof}
    For $x\in\scipoly{p}{q}$ consider the network $\digraph{p}{q}$ with
    \[
        \text{capacity }x_{ij}\text{ on the diagonal arc }\darc{i-1}{j-1}
    \]
for each $(i,j)\in\orbipartinds{p}{q}$ and infinite capacities on
all other arcs. In this network, we construct a feasible flow
$y\in\flows{p}{q}$ of value one with the property
\begin{equation}\label{eq:flowcond}
    y_{\varc{i-1}{j-1}}>0\quad\Rightarrow\quad y_{\darc{i-1}{j-1}}=x_{ij}
\end{equation}
for all $(i,j)\in\orbipartinds{p}{q}$. Phrased verbally, the
flow~$y$ uses a vertical arc only if the diagonal arc emanating
from its tail is saturated. Such a flow can easily be constructed
in the following way: start by sending one unit of flow from~$s$
to~$t$ along the vertical path in column zero. At each  step, if
the flow~$y$ constructed so far violates~\eqref{eq:flowcond} for
some $(i,j)\in\orbipartinds{p}{q}$, choose such a pair $(i,j)$
with minimal~$j$, breaking ties by choosing~$i$ minimally as well.
With
\[
    \vartheta=\min\{y_{\varc{i-1}{j-1}},x_{ij}-y_{\darc{i-1}{j-1}}\}
\]
(i.e., the minimum of the flow on the vertical arc and the residual capacity on the diagonal arc starting at $(i-1,j-1)$) reroute~$\vartheta$ units of the flow  currently travelling on the vertical arc $\varc{i-1}{j-1}$ along the path starting with the diagonal arc  $\darc{i-1}{j-1}$ and then using the vertical arcs in column~$j$.
Note that this affects only arcs leaving nodes $(k,\ell)$ with $k\ge i$ and $\ell\ge j$.
 After this rerouting, \eqref{eq:flowcond} holds for $(i,j)$. The minimality requirements in the choice of $(i,j)$ ensure that the flow on the two arcs leaving $(i,j)$ is not changed again afterwards. Thus, \eqref{eq:flowcond} will always be satisfied for $(i,j)$ in the future. Therefore, the procedure eventually ends with a flow as required.

As $(x,y)$ satisfies~\eqref{eq:bindxy} by construction, it  suffices to show~\eqref{eq:bindxybar} in order to prove
 $(x,y)\in\extpoly{p}{q}$.
To this end, let $(i,j)\in\orbipartinds{p}{q}$. Due to Remark~\ref{rem:cuts}, we only need to prove
\begin{equation}\label{eq:barbar}
    y(\barr{i}{j})\ge x(\barr{i}{j})\,.
\end{equation}

We construct a directed $(k,\ell)$-$(i,j)$-path~$\Gamma$ in the residual network with respect to the flow~$y$ (containing only those arcs of~$\digraph{p}{q}$ that are not saturated by~$y$)
with $k=\ell$ or $\ell=0$ in the following way: Starting from the trivial (length zero) $w$-$(i,j)$-path with $w=(i,j)$, in each step we extend the path at its current start node~$w$ by the diagonal arc entering~$w$ if this arc is part of the residual network, and by the vertical arc entering~$w$ otherwise. As the residual network contains all vertical arcs, we clearly can proceed this way until the start node of the current path is some node $(k,\ell)$ with $k=\ell$ or with $\ell=0$.

Since~$\Gamma$ is a path in the residual network and due to~\eqref{eq:flowcond}, we have
\begin{equation}\label{eq:vertouttgamma}
    y(\vertoutarcs{\tnodeset{\Gamma}})=0\,.
\end{equation}

If $\ell=0$, part~(2) of Lemma~\ref{lem:cuts} together with~\eqref{eq:vertouttgamma} yields $y(\barr{i}{j})=1$, from which~\eqref{eq:barbar} follows since~$x$ satisfies the row-sum inequality $x(\row{i})\le 1$ and the nonnegativity constraints.

If $\ell\ne 0$, then $k=\ell\ge 1$. Thus, according to part~(1) of Lemma~\ref{lem:cuts} and due to~\eqref{eq:vertouttgamma}, we have
\begin{equation}\label{eq:diaginsbarrr}
    y(\diaginarcs{\snodeset{\Gamma}})=y(\barr{i}{j})\,.
\end{equation}
Since we preferred diagonal arcs from the residual network in our backwards construction of~$\Gamma$, we find that all arcs from $\diaginarcs{\snodeset{\Gamma}}$ are saturated by the flow~$y$. Therefore, for the shifted column $S=\snodeset{\Gamma}$, we have (using~\eqref{eq:diaginsbarrr})
\begin{equation}\label{eq:scdiaginsnodes}
    x(S)=y(\diaginarcs{\snodeset{\Gamma}}=y(\barr{i}{j})\,.
\end{equation}
Let $a\in\arcs{p}{q}$ be the arc in~$\Gamma$ entering~$(i,j)$, and denote by~$\Gamma'$ the path arising from~$\Gamma$ by removing~$a$.

If~$a$ is  diagonal, then the $(\ell,\ell)$-$(i-1,j-1)$-path~$\Gamma'$ satisfies $\snodeset{\Gamma'}=S$, and the shifted-column inequality $x(\barr{i}{j})\le x(S)$ (satisfied by~$x$) establishes~\eqref{eq:barbar} via~\eqref{eq:scdiaginsnodes}.

If~$a$ is  vertical, by construction of~$\Gamma$, we have $y_{\darc{i-1}{j-1}}=x_{ij}$. Furthermore, the $(\ell,\ell)$-$(i-1,j)$-path~$\Gamma'$ satisfies $\snodeset{\Gamma'}=S\setminus\{(i,j\})$. Thus  using the shifted-column inequality $x(\barr{i}{j+1})\le x(S\setminus\{(i,j)\})$ one obtains from~\eqref{eq:scdiaginsnodes} the inequality
\[
    y(\barr{i}{j})=x(S)=x(S\setminus\{(i,j)\})+x_{ij}\ge x(\barr{i}{j+1})+x_{ij}=x(\barr{i}{j})\,.
\]
Thus, \eqref{eq:barbar} is established also in this case, which finally proves Theorem~\ref{thm:liftscipoly}.
\end{proof}

\section{Remarks}
\label{sec:remarks} In our view, the extended formulations for the
orbitopes~$\orbipack{p}{q}$ and $\orbipart{p}{q}$ presented in
this paper once more demonstrate the power that lies in the
concept of extended formulations. Not only do the extended
formulations provide a very compact way of describing the
orbitopes, but also do they allow to derive rather simple proofs
of the fact that nonnegativity constraints, row-sum
inequalities/equations, and SCIs suffice in order to linearly
describe~$\orbipack{p}{q}$ and~$\orbipart{p}{q}$.

To us it seems that these proofs better reveal the reason why  SCIs are necessary and (basically) sufficient in these descriptions. The construction of the flow in the proof of Thm.~\ref{thm:liftscipoly} is quite natural. The rest of the proof (i.e., the backwards construction of the path~$\Gamma$) one may  also have  done without knowing the SCIs in advance. Thus, knowing the extended formulation, one possibly could also have \emph{detected} SCIs on the way trying to do this proof.

An interesting practical question is whether the very sparse and compact extended formulations for orbitopes lead to performance gains in branch-and-cut algorithms compared to versions that dynamically add SCIs via the linear time separation algorithm (described in~\cite{KP08}).

\subsection*{Acknowledgements}
We would like to thank Laura Sanit\`{a} for useful discussions and Marc Pfetsch for valuable comments on an earlier version of this paper.

\providecommand{\bysame}{\leavevmode\hbox to3em{\hrulefill}\thinspace}
\providecommand{\MR}{\relax\ifhmode\unskip\space\fi MR }
\providecommand{\MRhref}[2]{%
  \href{http://www.ams.org/mathscinet-getitem?mr=#1}{#2}
}
\providecommand{\href}[2]{#2}

\end{document}